\newcommand{\al}{\alpha}
\newcommand{\be}{\beta}
\newcommand{\ga}{\gamma}
\newcommand{\de}{\delta}
\newcommand{\la}{\lambda}
\newcommand{\om}{\omega}
\newcommand{\eps}{\varepsilon}
\newcommand{\vv}{\varphi}
\theoremstyle{plain}
\numberwithin{equation}{section}
\newtheorem{thm}{Theorem}[section]
\newtheorem{lem}[thm]{Lemma}
\newtheorem{prop}[thm]{Proposition}
\newtheorem{cor}[thm]{Corollary}
\theoremstyle{definition}
\newtheorem*{ip}{Inverse Problem}
\newtheorem*{dirp}{Direct Problem}
\theoremstyle{remark}
\newtheorem{remark}[thm]{Remark}
\DeclareMathOperator*{\Res}{Res}
\DeclareMathOperator{\dom}{dom}
\DeclareMathOperator{\ind}{ind}
\begin{document}

\begin{center}
{\Large\bf Uniform stability for the inverse Sturm-Liouville\\[0.2cm] problem with eigenparameter-dependent\\[0.2cm] boundary conditions}
\\[0.5cm]
{\bf Natalia P. Bondarenko}
\end{center}

\vspace{0.5cm}

{\bf Abstract.} We consider a class of self-adjoint Sturm-Liouville problems with rational functions of the spectral parameter in the boundary conditions. The uniform stability for direct and inverse spectral problems is proved for the first time for Sturm-Liouville operator pencils with boundary conditions depending on the eigenparameter. Furthermore, we obtain stability estimates for finite data approximations, which are important from the practical viewpoint. Our method is based on Darboux-type transforms and proving of their Lipschitz continuity.

\medskip

{\bf Keywords:} Sturm-Liouville equation; Herglotz-Nevanlinna functions; inverse spectral problems; uniform stability; Darboux-type transforms.

\medskip

{\bf AMS Mathematics Subject Classification (2020):} 34A55 34B07 34B09 34L40

\section{Introduction} \label{sec:intr}

Consider the eigenvalue problem $\mathcal L(\sigma, f, F)$:
\begin{gather} \label{eqv}
-(y^{[1]}_{\sigma})' - \sigma(x) y^{[1]}_{\sigma} - \sigma^2(x) y = \lambda y, \quad x \in (0,\pi), \\ \label{bc}
\dfrac{y^{[1]}_{\sigma}(0)}{y(0)} = -f(\la), \quad \dfrac{y^{[1]}_{\sigma}(\pi)}{y(\pi)} = F(\la),
\end{gather}
where $\sigma$ is a real-valued function of $L_2[0,\pi]$, $y^{[1]}_{\sigma} := y' - \sigma y$ is the so-called quasi-derivative, $\la$ is the spectral parameter, $f(\la)$ and $F(\la)$ are rational Herglotz-Nevanlinna functions (see Section~\ref{sec:prelim} for details). Solutions of equation \eqref{eqv} are considered in the domain
\begin{equation} \label{defDsi}
\mathcal D_{\sigma} := \{ y \in W_1^1[0,\pi] \colon y^{[1]}_{\sigma} \in W_1^1[0,\pi] \}.
\end{equation}

Equation \eqref{eqv} is equivalent to the Sturm-Liouville equation 
\begin{equation} \label{eqvStL}
-y'' + q(x) y = \la y, \quad x \in (0, \pi),
\end{equation}
with the potential $q(x) := \sigma'(x)$ of the distribution space $W_2^{-1}[0,\pi]$.

This paper is concerned with the theory of inverse spectral problems, which consist in the reconstruction of differential operators from their spectral data. 
Such kind of problems arise in various physical and engineering applications, when one needs to find unknown medium properties from some measurable data or to design a device with desired characteristics. The most complete results in inverse spectral theory have been obtained for the Sturm-Liouville operators with constant coefficients in the boundary conditions (see the monographs \cite{Lev84, FY01, Mar11, Krav20} and references therein). Eigenvalue problems with the spectral parameter in boundary conditions naturally arise in acoustics \cite{KW77}, quantum mechanics \cite{Gran22}, fluid dynamics \cite{KG15}, and other physical applications \cite{Ful80}.
Inverse spectral theory for self-adjoint Sturm-Liouville problems with rational Herglotz-Nevanlinna functions in the boundary conditions was studied in \cite{BBW02, BBW04, Gul19, Gul20-ann, Gul20-ams, Gul23, YW18}. In particular, Guliyev \cite{Gul19} has obtained the spectral data characterization for the problem \eqref{eqv}--\eqref{bc} with $\sigma \in L_2[0,\pi]$. Furthermore, Freiling and Yurko \cite{FY10, FY12} proposed a method for solving non-self-adjoint inverse Sturm-Liouville problems with arbitrary polynomials of the spectral parameter in the boundary conditions. In recent years, the approach of \cite{FY10} was developed for the case of distribution potential $q \in W_2^{-1}[0,\pi]$ in \cite{CB23, CB24, CB25-jiip}. Some other issues of inverse spectral theory for differential and integro-differential operators with polynomials in the boundary conditions were considered in \cite{Wang12, Yang13} and \cite{Bond19}, respectively.

In recent years, a significant progress has been achieved in investigation of uniform stability for inverse spectral problems. In \cite{SS10, SS13}, Savchuk and Shkalikov have proved the unconditional uniform stability for inverse Sturm-Liouville problems with potentials in the Sobolev spaces $W_2^{\theta}$, $\theta \ge -1$. The case $\theta = 0$ was studied by Hryniv \cite{Hryn11}, who also obtained the results of this kind for the Dirac system \cite{Hryn11-dir}. Furthermore, the uniform stability for inverse problems has been proved for several classes of non-local operators, in particular, for integro-differential operators \cite{But21}, functional-differential operators with constant delay \cite{BD22} and with frozen argument \cite{Kuz23}. The uniform stability of the non-self-adjoint Sturm-Liouville problem was investigated in \cite{Bond24}. However, to the best of the author's knowledge, there are no uniform stability results for the Sturm-Liouville inverse problems with eigenparameter-dependent boundary conditions. Local stability of such problems was studied in \cite{CB24, CB25-jiip, CB25-jde}, but therein only small perturbations of spectral data were considered and the constants in the obtained stability estimates depend on a fixed potential. The investigation of uniform stability requires a different approach. One has to find large sets of spectral data or of problem parameters, on which stability estimates are uniform.

In this paper, we prove the uniform stability of direct and inverse spectral problems for the Sturm-Liouville problem~\eqref{eqv}--\eqref{bc} with Herglotz-Nevanlinna functions in the boundary conditions and with $\sigma$ in the scale of Sobolev spaces $W_2^{\al}[0,\pi]$. As spectral data, we use the eigenvalues $\{ \la_n \}_{n \ge 1}$ and the norming constants $\{ \ga_n \}_{n \ge 1}$ according to the problem statement in \cite{Gul19}.
The most challenging issue in studying the uniform stability is to describe such sets, on which stability estimates hold uniformly. We establish the correspondence between the sets $\mathcal P_{Q,\de}$ of the problem parameters $(\sigma, f, F)$ and the sets $\mathcal B_{R,\eps}$ of the spectral data (see Section~\ref{sec:main} for details) and obtain the uniform stability on these sets. Analogously to the previous works by Savchuk and Shkalikov \cite{SS10, SS13} and by Hryniv \cite{Hryn11}, the set $\mathcal B_{R,\eps}$ of spectral data is bounded by the upper constraint $R$, as well as by the lower constraint $\eps > 0$ on the ``gap'' between neighboring eigenvalues $\la_n$ and $\la_{n+1}$ and on the norming constant $\ga_n$. A feature of our study is that the set $\mathcal P_{Q,\de}$ of problem parameters is similarly bounded not only by an upper constraint $Q$ but also by a lower constraint $\de > 0$ related to the Herglotz-Nevanlinna functions $f(\la)$ and $F(\la)$. For the Dirichlet or the Robin boundary conditions, this lower constraint is unnecessary, but it is crucial for forming a closed set of rational functions. Our proof method is based on the Darboux-type transforms that were constructed by Guliyev \cite{Gul19}. Those transforms allow us, by using finite changes of spectral data, reduce the boundary value problem \eqref{eqv}--\eqref{bc} to the similar problem with the Dirichlet boundary conditions. We prove the Lipschitz continuity of Guliyev's transforms and then transfer the results by Savchuk and Shkalikov \cite{SS10} to the problem with eigenparameter-dependent boundary conditions.

Applying our main results, we obtain the uniform stability estimates for the solution of the inverse problem by the finite spectral data $\{ \la_n, \ga_n \}_{n = 1}^m$. This kind of estimates is important from the practical viewpoint, since in applications only a finite amount of data is usually available. Therefore, a number of research works are focused on stability of finite data approximations for the Sturm-Liouville operators on a finite interval \cite{Ryab73, MW05, SS14, Sav16, GMXA23}, on the half-line \cite[Section~5]{Mar11}, and on the line \cite{Akt87, Hit00}. Furthermore, this aspect caused interest for the transmission inverse eigenvalue problem \cite{XY20, XGY22} and for inverse resonance problems \cite{MSW09, Bled12}. In this paper, we consider two cases for the problem \eqref{eqv}--\eqref{bc}, when finite spectral data are known (i) precisely, (ii) with error at most $\eps$. For the both cases, estimates for finite data approximations are readily deduced from our main theorems on the uniform stability.

The paper is organized as follows. Section~\ref{sec:prelim} contains necessary preliminaries about the Sobolev spaces $W_2^{\al}$ and rational Herglotz-Nevanlinna functions. In Section~\ref{sec:main}, we introduce the spectral data and present the main results. In Section~\ref{sec:aux}, auxiliary lemmas about the Lipschitz continuity are proved for some characteristics of the problem $\mathcal L(\sigma, f, F)$. In Section~\ref{sec:trans}, we consider the Darboux-type transforms from \cite{Gul19} and study their Lipschitz continuity. Section~\ref{sec:proofs} contains the proofs of our main theorems.

\section{Preliminaries} \label{sec:prelim}

\subsection{Spaces $W_2^{\alpha}$ and $l_2^{\alpha}$}

Throughout this paper, we denote by $L_p[0,\pi]$ and $W_p^k[0,\pi]$ for $p \ge 1$ and integer $k \ge 0$ the real Lebesgue and Sobolev spaces, respectively, with the corresponding norms:
$$
\| u \|_{L_p[0,\pi]} = \left(\int_0^{\pi} |u(x)|^p \, dx\right)^{1/p}, \quad \| u \|_{W_p^k[0,\pi]} = \left( \sum_{j = 0}^k \Bigl\| \frac{d^j u}{dx^j} \Bigr\|_{L_p[0,\pi]}^p \right)^{1/p}.
$$
In particular, $W_2^k[0,\pi]$ ($k = 0, 1, 2, \dots$) are Hilbert spaces and $W_2^0[0,\pi] = L_2[0,\pi]$.

For $\al \ge 0$, denote by $l_2^{\al}$ the space of real infinite sequences $v = \{ v_n \}_{n \ge 1}$ such that
$$
\| v \|_{l_2^{\al}} := \left( \sum_{n = 1}^{\infty} n^{2\al} v_n^2 \right)^{1/2} < \infty.
$$

We define $W_2^{\al}[0,\pi]$ for $\al \in (0,\tfrac{1}{2})$ as the space of functions $u \in L_2[0,\pi]$, whose Fourier coefficients $\hat u_k := \frac{2}{\pi} \int_0^{\pi} u(x) \sin kx \, dx$ form a sequence of $l_2^{\al}$, and $\| u \|_{W_2^{\al}[0,\pi]} := \| \{ \hat u_k \} \|_{l_2^{\al}}$. For brevity, we use the notation $\| . \|_{\al} = \| . \|_{W_2^{\al}[0,\pi]}$ for the norms of functions and $\| . \|_{\al} = \| . \|_{l_2^{\al}}$ for the norms of sequences. Alternatively, the space $W_2^{\al}[0,\pi]$ can be defined by real interpolation $W_2^{\al}[0,\pi] := (L_2[0,\pi], W_2^1[0,\pi])_{\al, 2}$ or by complex interpolation $W_2^{\al}[0,\pi] := [L_2[0,\pi], W_2^1[0,\pi]]_{\al}$ of Banach spaces for $\al \in (0,1)$ (see \cite{Trib78, SS05}).

The following proposition easily follows from the above definitions or
can be deduced from the results of \cite{Trib78}.

\begin{prop} \label{prop:embed}
Let $\al \in (0, \tfrac{1}{2})$. Then, the following assertions hold.

(i) The spaces $W_2^{\al}[0,\pi]$ and $l_2^{\al}$ are compactly embedded in $L_2[0,\pi]$ and $l_2$, respectively.

(ii) The space $W_1^1[0,\pi]$ is continuously embedded in $W_2^{\al}[0,\pi]$.
\end{prop}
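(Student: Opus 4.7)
The plan is to prove (i) by transporting everything through the Fourier sine series and then (ii) by a direct integration-by-parts estimate on the sine coefficients.

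For part (i), I would first exploit the fact that $\{\sqrt{2/\pi}\sin kx\}_{k \ge 1}$ is an orthonormal basis of $L_2[0,\pi]$, so the map $u \mapsto \{\hat u_k\}_{k \ge 1}$ is (up to the normalising constant) an isometric isomorphism between $L_2[0,\pi]$ and $l_2$, and by definition it restricts to an isometric isomorphism between $W_2^{\al}[0,\pi]$ and $l_2^{\al}$. Hence the compact embedding $W_2^{\al}[0,\pi] \hookrightarrow L_2[0,\pi]$ reduces to the compact embedding $l_2^{\al} \hookrightarrow l_2$. For the latter, I would verify total boundedness of the unit ball of $l_2^{\al}$ in $l_2$: given any bounded family in $l_2^{\al}$ and any $N \in \mathbb N$, the tail satisfies
$$
\sum_{n > N} v_n^2 \le N^{-2\al} \sum_{n > N} n^{2\al} v_n^2 \le N^{-2\al} \| v \|_{l_2^{\al}}^2,
$$
which tends to $0$ uniformly as $N \to \infty$. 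Since the truncation to the first $N$ coordinates is a finite-rank, hence compact, operator into $l_2$, a standard $\eps/2$-argument yields total boundedness and thus compactness.

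For part (ii), I would show directly that the sine coefficients of $u \in W_1^1[0,\pi]$ decay as $O(1/k)$. Integration by parts gives
$$
\hat u_k = \frac{2}{\pi} \int_0^{\pi} u(x) \sin kx \, dx = \frac{2}{\pi k} \bigl( u(0) - (-1)^k u(\pi) \bigr) + \frac{2}{\pi k} \int_0^{\pi} u'(x) \cos kx \, dx,
$$
so using $\| u \|_{C[0,\pi]} \le C \| u \|_{W_1^1[0,\pi]}$ and $\| u' \|_{L_1[0,\pi]} \le \| u \|_{W_1^1[0,\pi]}$ one obtains $|\hat u_k| \le C_1 k^{-1} \| u \|_{W_1^1[0,\pi]}$. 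Hence
$$
\| u \|_{\al}^2 = \sum_{k=1}^{\infty} k^{2\al} \hat u_k^2 \le C_1^2 \| u \|_{W_1^1[0,\pi]}^2 \sum_{k=1}^{\infty} k^{2\al - 2},
$$
and the last series converges precisely because $\al < \tfrac12$ forces $2\al - 2 < -1$. This yields the continuous embedding $W_1^1[0,\pi] \hookrightarrow W_2^{\al}[0,\pi]$.

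The argument is essentially bookkeeping; the only place where the hypothesis $\al < \tfrac12$ is actually used is the convergence of $\sum k^{2\al-2}$ in part (ii) (the compactness argument in part (i) works for any $\al > 0$). So the only ``obstacle'' worth flagging is being careful that the boundary terms in the integration by parts are controlled by $\| u \|_{W_1^1[0,\pi]}$ via the Sobolev embedding $W_1^1[0,\pi] \hookrightarrow C[0,\pi]$, since without it the pointwise values $u(0), u(\pi)$ have no a priori meaning in $L_2$ norms.
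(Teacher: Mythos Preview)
Your proof is correct. The paper does not give a detailed argument for this proposition; it simply states that it ``easily follows from the above definitions or can be deduced from the results of \cite{Trib78}'', and your argument is precisely the direct-from-definitions verification that the paper alludes to.
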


Denote by $\mathring{L}_2[0,\pi]$ and $\mathring{W}_2^{\al}[0,\pi]$ the corresponding subspaces of functions with the zero mean value $\int_0^{\pi} u(x) \, dx = 0$. Note that the antiderivative $\sigma(x)$ of the potential $q(x)$ from \eqref{eqvStL} can be chosen up to an additive constant. For definiteness, we assume that $\sigma \in \mathring{L}_2[0,\pi]$.

\subsection{Rational Herglotz-Nevanlinna functions}

A rational Herglotz-Nevanlinna function has the form
\begin{equation} \label{HN}
f(\la) = h_0 \la + h + \sum_{j = 1}^d \frac{\de_j}{h_j - \la},
\end{equation}
where
\begin{equation} \label{condhde}
h_0 \ge 0, \quad h \in \mathbb R, \quad \de_j > 0, \, j = \overline{1,d}, \quad
h_1 < h_2 < \dots < h_d.
\end{equation}

Define the index of $f(\la)$ as 
$$
\ind f := \begin{cases}
                        2d + 1, & h_0 > 0, \\
                        2d, & h_0 = 0.
                   \end{cases}
$$

Additionally, we consider $f = \infty$ with the index $\ind f = -1$.
For $M \ge 0$, denote by $\mathcal R_M$ the set of rational functions of form \eqref{HN} with $\ind f = M$. Put $\mathcal R_{-1} := \{ \infty \}$ and
$$
\mathcal R := \bigcup_{M = -1}^{\infty} \mathcal R_M.
$$

A function of form \eqref{HN} can be represented as a fraction of polynomials:
\begin{equation} \label{ffrac}
f(\la) = \frac{f_{\uparrow}(\la)}{f_{\downarrow}(\la)}, \quad
f_{\downarrow}(\la) := h_0' \prod_{j = 1}^d (h_j - \la), \quad
h_0' := \begin{cases}
            h_0^{-1}, & h_0 > 0, \\
            1, & h_0 = 0.
       \end{cases}
\end{equation}

If $\ind f \ge 2$, then $f(\la)$ has singular points $\{ h_j \}_{j = 1}^d$ and strictly increases on the intervals $(-\infty, h_1)$, $(h_1, h_2)$, \dots, $(h_{d-1}, h_d)$, $(h_d, +\infty)$, since $f'(\la) > 0$. Denote 
$$
\mathring{\pi}(f) := \begin{cases}
                        h_1, & \ind f \ge 2, \\
                        \infty, & \ind f < 2.
                    \end{cases}
$$

Consider two cases separately. 

\smallskip

\underline{Case $\ind f = 2d$}: $\lim\limits_{\la \to \pm \infty} f(\la) = h$, so the polynomial $f_{\uparrow}(\la)$ has exactly $d$ zeros for $h \ne 0$ and $(d-1)$ zeros for $h = 0$. Then, the polynomials $f_{\downarrow}(\la)$ and $f_{\uparrow}(\la)$ can be represented in terms of their coefficients as follows:
$$
f_{\downarrow}(\la) = \sum_{n = 0}^d a_n \la^n, \quad f_{\uparrow}(\la) = \sum_{n = 0}^d b_n \la^n, \quad a_d = (-1)^d, \quad \deg f_{\uparrow} \in \{ d-1, d \}.
$$
Define
$$
c(f) := (a_0, a_1, \dots, a_{d-1}, b_0, b_1, \dots, b_d).
$$

\underline{Case $\ind f = 2d + 1$}: $\lim\limits_{\la \to \pm \infty} f(\la) = \pm \infty$, so the polynomial $f_{\uparrow}(\la)$ has exactly $d + 1$ zeros. Taking \eqref{HN} and \eqref{ffrac} into account, we conclude that
$$
f_{\downarrow}(\la) = \sum_{n = 0}^d a_n \la^n, \quad f_{\uparrow}(\la) = \sum_{n = 0}^{d+1} b_n \la^n, \quad b_{d+1} = (-1)^d, \quad \deg f_{\downarrow} = d.
$$
Define
$$
c(f) := (a_0, a_1, \dots, a_d, b_0, b_1, \dots, b_d).
$$

In the both cases, we associate a rational function $f \in \mathcal R_M$, $M \ge 0$, with the vector $c(f)$ of $\mathbb R^{M + 1}$ and use the Euclidean norm $\| . \| := \| . \|_{\mathbb R^{M + 1}}$ for this vector.

For $\ind f = -1$, we set $f_{\downarrow}(\la) := -1$, $f_{\uparrow}(\la) := 0$, so $c(f)$ is the empty vector and we assume that $\| c(f) \| = 0$.

Consider some examples:
\begin{align*}
& \ind f = 0\colon \quad f(\la) = h, \quad f_{\uparrow}(\la) = h, \quad f_{\downarrow}(\la) = 1, \quad \| c(f) \| = |h|; \\
& \ind f = 1 \colon  \quad f(\la) = h_0 \la + h, \quad f_{\uparrow}(\la) = \la + \frac{h}{h_0}, \quad f_{\downarrow}(\la) = \frac{1}{h_0}, \quad c(f) = \left( \frac{1}{h_0}, \frac{h}{h_0}\right).
\end{align*}

Note that the set $\{ c(f) \colon f \in \mathcal R_M \}$ is not closed with respect to the norm $\| . \|_{\mathbb R^{M+1}}$ for $M \ge 1$. For an integer $M \ge 0$ and reals $Q > 0$, $\de > 0$, define the set $\mathcal R_{M, Q, \de} \subset \mathcal R_M$ of the functions $f(\la)$ of form \eqref{HN} satisfying the conditions:
\begin{gather} \label{RMQde1}
|h| \le Q, \quad \de \le \de_j \le Q, \: j = \overline{1,d}, \quad h_1 \ge 1, \quad h_j + \de \le h_{j+1}, \: j = \overline{1,d-1}, \quad h_d \le Q, \\ \label{RMQde2}
\begin{cases}
\de \le h_0 \le Q, & M = 2d + 1, \\
h_0 = 0, & M = 2d.
\end{cases}
\end{gather}
For $M = -1$, set $\mathcal R_{M, Q, \de} := \mathcal R_{M}$.

\begin{lem} \label{lem:compRM}
Let $M \ge 0$, $Q > 0$, and $\de > 0$. Then the set $\{ c(f) \colon f \in \mathcal R_{M, Q, \de} \}$ is compact in $\mathbb R^{M + 1}$.
\end{lem}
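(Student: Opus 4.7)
The plan is to realise the set $\{c(f)\colon f \in \mathcal R_{M,Q,\de}\}$ as the continuous image of a compact set of parameters, and then invoke the standard fact that continuous images of compacta are compact. The case $M = -1$ is trivial since $c(f)$ is the empty vector, so assume $M \ge 0$.

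First, I would use the representation \eqref{HN} to parametrise $\mathcal R_{M,Q,\de}$. When $M = 2d$, the tuple $(h, \de_1, \dots, \de_d, h_1, \dots, h_d)$ has $2d+1 = M+1$ components, while $h_0 = 0$ is fixed. When $M = 2d+1$, the tuple $(h_0, h, \de_1, \dots, \de_d, h_1, \dots, h_d)$ has $2d+2 = M+1$ components. In both cases the conditions \eqref{RMQde1}--\eqref{RMQde2} cut out a subset $\mathcal K_{M,Q,\de} \subset \mathbb R^{M+1}$. I would then check that $\mathcal K_{M,Q,\de}$ is closed (every defining inequality is non-strict) and bounded: $|h| \le Q$, $\de \le \de_j \le Q$, $h_0 \le Q$ directly; for the poles, the chain $1 \le h_1$, $h_j + \de \le h_{j+1}$, $h_d \le Q$ confines each $h_j$ to a bounded interval. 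Hence $\mathcal K_{M,Q,\de}$ is compact in $\mathbb R^{M+1}$ by Heine--Borel.

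Next, I would use \eqref{ffrac} to write the coordinates of $c(f)$ as explicit functions of the parameters. The coefficients $a_n$ are symmetric functions in $h_1, \dots, h_d$, multiplied by the prefactor $h_0'$; when $M = 2d$ this prefactor equals $1$, and when $M = 2d+1$ it equals $1/h_0$, which is continuous on $\mathcal K_{M,Q,\de}$ because $h_0 \ge \de > 0$ there. Similarly, writing $f_{\uparrow}(\la) = f(\la) f_{\downarrow}(\la) = h\, f_{\downarrow}(\la) + \sum_{j=1}^d \de_j h_0' \prod_{i \ne j} (h_i - \la)$, I see that the $b_n$ are polynomials in $h, \de_j, h_j$ with continuous dependence on $h_0'$. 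Therefore the map $\Phi\colon \mathcal K_{M,Q,\de} \to \mathbb R^{M+1}$, $(h_0, h, \de_j, h_j) \mapsto c(f)$, is continuous.

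Finally, since $\{c(f)\colon f \in \mathcal R_{M,Q,\de}\} = \Phi(\mathcal K_{M,Q,\de})$, it is compact as the continuous image of a compact set. The only place that requires any care is separating the two cases $M = 2d$ and $M = 2d + 1$ and checking that the formula for $h_0'$ does not produce a singularity on $\mathcal K_{M,Q,\de}$; this is precisely where the lower bound $\de \le h_0$ in \eqref{RMQde2} plays its role, and I expect this to be the main (minor) obstacle in an otherwise routine verification.
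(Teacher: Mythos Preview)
Your proposal is correct and follows essentially the same strategy as the paper's proof: the parameter set cut out by \eqref{RMQde1}--\eqref{RMQde2} is compact, and $c(f)$ is a continuous image of it. One minor slip: your displayed formula for $f_{\uparrow}(\la)$ drops the contribution $h_0\la\cdot f_{\downarrow}(\la)$ (present when $M=2d+1$), but including it does not affect the continuity argument.
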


\begin{proof}
By construction, the coefficients $\{ a_n \}$ and $\{ b_n \}$ of the polynomials $f_{\uparrow}(\la)$ and $f_{\downarrow}(\la)$ depend continuously on the constants $h$, $h_0$ (if $h_0 \ne 0$), $h_j$ and $\de_j$ for $j = \overline{1,d}$. Furthermore, the set of vectors $(h, h_0, h_1, \dots, h_d, \de_1, \dots, \de_d)$ described by the conditions \eqref{RMQde1} and \eqref{RMQde2} is compact in $\mathbb R^{2d+2}$. Hence, its image under the continuous mapping is also compact.
\end{proof}

\begin{lem} \label{lem:subset}
Suppose that $M \ge 0$ and $A$ is such a subset of $\mathcal R_M$ that $\{ c(f) \colon f \in A \}$ is compact in $\mathbb R^{M + 1}$ and $\mathring{\pi}(f) \ge 1$ for $f \in A$.
Then, there exist $Q > 0$ and $\de > 0$ such that $A \subset \mathcal R_{M, Q, \de}$.
\end{lem}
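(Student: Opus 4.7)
The plan is to argue by contradiction, extracting a convergent subsequence from the compact image set $\{c(f) : f \in A\}$ and then transferring the convergence from the coefficient vectors $c(f)$ back to the underlying Herglotz-Nevanlinna parameters $(h, h_0, h_1, \dots, h_d, \delta_1, \dots, \delta_d)$.

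Suppose the conclusion fails; then for every integer $n \ge 1$ one can pick $f^{(n)} \in A \setminus \mathcal R_{M, n, 1/n}$, that is, a function violating at least one of \eqref{RMQde1}-\eqref{RMQde2} with $Q = n$ and $\delta = 1/n$. By compactness of $\{c(f) : f \in A\}$ in $\mathbb R^{M+1}$, I pass to a subsequence along which $c(f^{(n)})$ converges; since the image set is closed, the limit equals $c(f^*)$ for some $f^* \in A \subset \mathcal R_M$. Denote by $h^*, h_0^*, h_j^*, \delta_j^*$ the Herglotz-Nevanlinna parameters of $f^*$. By definition of $\mathcal R_M$, the poles are simple ($h_1^* < \cdots < h_d^*$) with residues $\delta_j^* > 0$, $h_0^* > 0$ in the odd-index case, and $h_1^* \ge 1$ by the hypothesis $\mathring{\pi}(f^*) \ge 1$.

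The key step is to prove that $c(f^{(n)}) \to c(f^*)$ entails convergence of each parameter of $f^{(n)}$ to the corresponding parameter of $f^*$. The leading coefficient of $f_{\downarrow}^{(n)}$ equals $(-1)^d$ when $M = 2d$ and $(-1)^d/h_0^{(n)}$ when $M = 2d+1$; by the convergence of $c(f^{(n)})$ and the positivity of $h_0^*$ it stays bounded and bounded away from zero. The classical continuous dependence of simple roots on the coefficients of a polynomial then yields $h_j^{(n)} \to h_j^*$ in the given ordering, whence $h_{j+1}^{(n)} - h_j^{(n)} \to h_{j+1}^* - h_j^* > 0$ and $h_1^{(n)} \to h_1^* \ge 1$. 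The residues are recovered from $\delta_j^{(n)} = -f_{\uparrow}^{(n)}(h_j^{(n)})/(f_{\downarrow}^{(n)})'(h_j^{(n)})$, which is a continuous function of the already-converging data, giving $\delta_j^{(n)} \to \delta_j^* > 0$. The constants $h^{(n)}$ and $h_0^{(n)}$ are likewise read off from the top coefficients of $f_{\uparrow}^{(n)}$ and $f_{\downarrow}^{(n)}$ and converge to $h^*$ and $h_0^*$.

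To close the argument I would set $Q > \max\{|h^*|, h_d^*, \max_j \delta_j^*, h_0^*\}$ and $\delta < \min\{\min_j \delta_j^*, \min_j (h_{j+1}^* - h_j^*), h_0^*\}$ (omitting the $h_0^*$ entries in the even case). Parameter convergence then places $f^{(n)} \in \mathcal R_{M, Q, \delta}$ for all sufficiently large $n$, contradicting $f^{(n)} \notin \mathcal R_{M, n, 1/n}$ as soon as $n > \max(Q, 1/\delta)$. The main obstacle I expect is precisely the continuity of the parameter map at $c(f^*)$: one must rule out degeneration of the leading coefficient of $f_{\downarrow}^{(n)}$, collision of poles in the limit, and vanishing of any residue. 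Each of these failures would drop the index of the limit below $M$, and so cannot occur, precisely because the limiting $f^*$ still has index exactly $M$.
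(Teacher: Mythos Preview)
Your argument is correct and follows essentially the same route as the paper: both proofs hinge on showing that the map $c(f) \mapsto (h, h_0, h_1, \dots, h_d, \de_1, \dots, \de_d)$ is continuous on $\{c(f) : f \in A\}$, using that the leading coefficient of $f_{\downarrow}$ is bounded away from zero (trivially for even $M$, and via $a_d = (-1)^d/h_0 \ne 0$ for odd $M$), that simple roots vary continuously with coefficients, and that the residues $\de_j = -f_{\uparrow}(h_j)/f_{\downarrow}'(h_j)$ are continuous in this data. The only difference is packaging: the paper applies ``continuous image of a compact set is compact'' directly to extract uniform $Q,\de$, whereas you run the equivalent sequential-compactness contradiction.
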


\begin{proof}
For $f \in A$, the coefficients $\{ a_n \}$ are uniformly bounded. In the case $M = 2d + 1$, we additionally have $a_d \ne 0$, so $a_d \ge \eps > 0$. This implies \eqref{RMQde2} with $Q = \eps^{-1}$ and some $\de > 0$ for $f \in A$. Consequently, for both even and odd $M$, the roots $\{ h_j \}_{j = 1}^d$ of the polynomial $f_{\downarrow}(\la)$ depend continuously on its coefficients. Furthermore, \eqref{HN} implies
$$
\de_j = -\Res_{\la = h_j} f(\la) = -\frac{f_{\uparrow}(h_j)}{f_{\downarrow}'(h_j)}, \quad h = f(0) - \sum_{j = 1}^d \frac{\de_j}{h_j}.
$$
Thus, the map $c(f) \mapsto (h, h_0, h_1, \dots, h_d, \de_1, \dots, \de_d)$ is continuous. In view of \eqref{condhde}, this yields \eqref{RMQde1} with some positive $Q$ and $\de$ for all $f \in A$.
\end{proof}

Throughout this paper, we use the following notations:
\begin{itemize}
\item When two problems $\mathcal L(\sigma_i, f_i, F_i)$, $i = 1, 2$, of the same class are considered, the lower index $i$ denotes an object related to the corresponding problem.
\item In estimates, the notation $C(A_1, A_2, \dots)$ is used for various positive constants depending on parameters $A_1$, $A_2$, etc. (e.g., $C(Q, \de)$, $C(R, \eps)$).
\end{itemize}

\section{Main results} \label{sec:main}

Consider the problem $\mathcal L(\sigma, f, F)$ of form \eqref{eqv}--\eqref{bc} and introduce the spectral data in accordance with \cite{Gul19}.

Denote by $\vv(x, \la)$ the solution of equation \eqref{eqv} under the initial conditions
\begin{equation} \label{icvv}
\vv(0, \la) = f_{\downarrow}(\la), \quad \vv^{[1]}_{\sigma}(0,\la) = -f_{\uparrow}(\la).
\end{equation}

The boundary value problem \eqref{eqv}--\eqref{bc} is self-adjoint. It has the countable set of real and simple eigenvalues $\la_1 < \la_2 < \dots < \la_n < \la_{n + 1} < \dots$, which coincide with the zeros of the characteristic function 
\begin{equation} \label{defchi}
\chi(\la) := F_{\uparrow}(\la) \vv(\pi, \la) - F_{\downarrow}(\la) \vv^{[1]}_{\sigma}(\pi, \la).
\end{equation}

In addition, define the norming constants
\begin{equation} \label{defga}
\ga_n := \int_0^{\pi} \vv^2(x, \la_n) \, dx + f'(\la_n) \vv^2(0, \la_n) + F'(\la_n) \vv^2(\pi, \la_n), \quad n \ge 1.
\end{equation}

The numbers $\{ \la_n, \ga_n \}_{n \ge 1}$ are called the spectral data of the problem \eqref{eqv}--\eqref{bc}.

This paper is focused on the two spectral problems:

\begin{dirp}
Given $(\sigma, f, F)$, find $\{ \la_n, \ga_n \}_{n \ge 1}$.
\end{dirp}

\begin{ip}
Given $\{ \la_n, \ga_n \}_{n \ge 1}$, find $(\sigma, f, F)$.
\end{ip}

The uniqueness of the inverse problem solution for $\sigma \in \mathring{L}_2[0,\pi]$ was established in \cite{Gul19}. Furthermore, for $\sigma \in \mathring{W}_2^{\alpha}[0,\pi]$, we get the following result.

\begin{thm}[Spectral data characterization] \label{thm:nsc}
Let $\al \in [0,\tfrac{1}{2})$ be real and $M, N \ge -1$ be integers.
For real numbers $\{ \la_n, \ga_n \}_{n \ge 1}$ to be the spectral data of a problem $\mathcal L(\sigma, f, F)$ with $\sigma \in \mathring{W}_2^{\alpha}[0,\pi]$, $f \in \mathcal R_M$, and $F \in \mathcal R_N$, the following conditions are necessary and sufficient:
\begin{gather}
\la_n < \la_{n + 1}, \quad \ga_n > 0, \quad n \ge 1, \\ \label{asympt}
\sqrt{\la_n} = n - \frac{M + N}{2} - 1 + \kappa_n, \qquad \ga_n = \frac{\pi}{2} n^{2M} (1 + \be_n), \quad n \ge 1,
\end{gather}
where $\{ \kappa_n \}$ and $\{ \be_n \}$ belong to $l_2^{\alpha}$.
\end{thm}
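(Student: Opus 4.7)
The plan is to argue by induction on the total index $M + N$, using the Darboux-type transforms of Section~\ref{sec:trans} to reduce the general problem $\mathcal L(\sigma, f, F)$ to the Dirichlet problem (the case $f = F = \infty$, i.e.\ $M = N = -1$). A single Guliyev transform lowers either $\ind f$ or $\ind F$ by one, which on the spectral side inserts or removes one eigenvalue and rescales the norming constants by explicit factors accounting for the shifts $-(M+N)/2 - 1$ in $\sqrt{\la_n}$ and the factor $n^{2M}$ in $\ga_n$ that appear in \eqref{asympt}. On the problem side, these transforms are given by explicit formulas which, as will be shown in Section~\ref{sec:trans}, preserve the space $\mathring W_2^{\al}[0,\pi]$.

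For the base case $M = N = -1$, the boundary conditions reduce to $y(0) = y(\pi) = 0$ and the problem becomes the classical Sturm--Liouville problem $-y'' + q y = \la y$ with $q = \sigma' \in W_2^{\al - 1}$. The conditions \eqref{asympt} then specialize to $\sqrt{\la_n} = n + \kappa_n$, $\ga_n = \tfrac{\pi}{2 n^2}(1 + \be_n)$ with $\{\kappa_n\}, \{\be_n\} \in l_2^{\al}$, which is precisely the Savchuk--Shkalikov spectral data characterization from \cite{SS10, SS13}; this is invoked directly.

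For necessity in the inductive step, starting from $(\sigma, f, F)$ with $\sigma \in \mathring W_2^{\al}$, $f \in \mathcal R_M$, $F \in \mathcal R_N$, one applies $M + N + 2$ successive Guliyev transforms to reach a Dirichlet problem with potential $\tilde\sigma$. The regularity preservation of the transforms yields $\tilde\sigma \in \mathring W_2^{\al}$, so the base case produces $l_2^{\al}$ remainders for the transformed data, and unwinding the explicit action of each transform on $\{\la_n, \ga_n\}$ translates these back into the asserted form \eqref{asympt}. Sufficiency is the mirror construction: given $\{\la_n, \ga_n\}$ obeying \eqref{asympt}, peel off finitely many terms corresponding to the prescribed indices $M, N$ and renormalize to obtain candidate Dirichlet data in the Savchuk--Shkalikov class; use Savchuk--Shkalikov to build $\tilde\sigma \in \mathring W_2^{\al}$, and then apply the inverse Darboux transforms to recover $(\sigma, f, F)$ with the prescribed indices and Sobolev regularity.

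The main technical obstacle will be controlling how each Darboux transform acts on the $l_2^{\al}$ tails of $\{\kappa_n\}$ and $\{\be_n\}$ and on the Sobolev norm of $\sigma$. This requires uniform asymptotic expansions of $\vv(x, \la_n)$ and of its quasi-derivative $\vv^{[1]}_{\sigma}(x, \la_n)$ in $n$, with remainders controlled in $l_2^{\al}$, together with estimates for how the polynomials $f_{\downarrow}, f_{\uparrow}, F_{\downarrow}, F_{\uparrow}$ enter the transform formulas. The needed uniform bounds are exactly those supplied by the Lipschitz continuity lemmas of Sections~\ref{sec:aux} and~\ref{sec:trans}; once they are available, the induction closes step by step.
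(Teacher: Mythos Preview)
Your approach is essentially the paper's own: induction on $M+N$ via the Darboux-type transforms of Section~\ref{sec:trans}, with the Savchuk--Shkalikov characterization as the base case, and the regularity preservation of the transforms (Lemmas~\ref{lem:uniT-}, \ref{lem:uniT+}, \ref{lem:uniS-}, \ref{lem:uniS+}) carrying the $l_2^{\al}$ condition through each step. Indeed the paper does not give a standalone proof of Theorem~\ref{thm:nsc} but extracts it (Remark~\ref{rem:proofnsc}) from the inductive proofs of Theorems~\ref{thm:unidir} and~\ref{thm:uniinv}.

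There is, however, a parity gap you overlook. The transforms $\mathbf{T}_-$, $\mathbf{T}_{-+}$, $\mathbf{T}_{+-}$, $\mathbf{T}_+$ each change \emph{both} $\ind f$ and $\ind F$ by $\pm 1$ (Proposition~\ref{prop:T}), so they all preserve the parity of $M+N$. Your description ``lowers either $\ind f$ or $\ind F$ by one'' is inaccurate, and your count of ``$M+N+2$ successive transforms'' does not match the actual mechanics: $\mathbf{T}_-$ drops $M+N$ by $2$, while $\mathbf{T}_{\pm\mp}$ leave it unchanged and serve only to move one unit of index from $f$ to $F$ or back when one of them is already at $-1$. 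Consequently, from the Dirichlet base case $(M,N)=(-1,-1)$ alone the induction reaches only even $M+N$. The paper notes this explicitly (Remark~\ref{rem:even}) and supplies a second base case, the Neumann--Dirichlet characterization from \cite{HM06}, to cover odd $M+N$; you need to add that ingredient for your argument to close.
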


Note that, if the sequences $\{ \la_n \}$ and $\{ \ga_n \}$ satisfy the asymptotics \eqref{asympt}, then one can uniquely determine $M$ and $N$, and then find $\{ \kappa_n \}$ and $\{ \be_n \}$.

For $\alpha = 0$, Theorem~\ref{thm:nsc} was proved in \cite{Gul19}. 
For $\alpha \in (0, \tfrac{1}{2})$, the proof is analogous. We only need to establish the correspondence between the spaces $W_2^{\alpha}[0,\pi]$ for $\sigma$ and $l_2^{\alpha}$ for the remainder sequences $\{ \kappa_n \}$ and $\{ \be_n \}$. 
It is worth mentioning that, for the cases of the Dirichlet and the Neumann-Dirichlet boundary conditions ($M \in \{ -1, 0 \}$, $N = -1$) and $\al > 0$, the spectral data characterization has been obtained in \cite{HM06, SS10}. Theorem~\ref{thm:nsc} is consistent with those results for $\al \in (0, \tfrac{1}{2})$ and generalizes them to the case of rational Herglotz-Nevanlinna functions in the boundary conditions.

In order to formulate our main results on the uniform stability, we need to introduce metric spaces of problem parameters and of spectral data.
Consider the metric space 
$$
\mathcal P^{\al, M, N} := \mathring{W}_2^{\al}[0,\pi] \times \mathcal R_M \times \mathcal R_N 
$$
of triples $P = (\sigma, f, F)$ with the distance
\begin{equation} \label{metrd}
\mathbf{d}_{\al}(P_1, P_2) := \| \sigma_1 - \sigma_2 \|_{\al} + \| c(f_1) - c(f_2) \|_{\mathbb R^{M + 1}} + \| c(F_1) - c(F_2) \|_{\mathbb R^{N + 1}}, 
\end{equation}
where $P_i := (\sigma_i, f_i, F_i)$, $i = 1, 2$. Furthermore, introduce the metric space of real-valued sequences $\mathfrak S = \{ \la_n, \ga_n\}_{n \ge 1}$ satisfying the asymptotics \eqref{asympt} for fixed $\al \in [0,\tfrac{1}{2})$, $M, N \ge -1$, with the distance
\begin{equation} \label{metrS}
\varrho_{\al}(\mathfrak S_1, \mathfrak S_2) := \| \{ \kappa_{n,1} - \kappa_{n, 2} \} \|_{\al} + \| \{ \be_{n,1} - \be_{n, 2} \} \|_{\al},
\end{equation}
where $\{ \kappa_{n,i} \}$ and $\{ \be_{n,i} \}$ are the remainders from the asymptotics \eqref{asympt} for $\mathfrak S_i := \{ \la_{n,i}, \ga_{n,i} \}$, $i = 1, 2$.

Next, let us define sets in the introduced metric spaces, on which the uniform stability of direct and inverse spectral mappings will be established.
Fix $\al \in (0, \tfrac{1}{2})$ and $M, N \ge -1$ such that $M + N$ is even (see Remark~\ref{rem:even} below).
For $Q > 0$ and $\de > 0$, introduce the set
\begin{equation} \label{defPQde}
\mathcal P_{Q, \de}^{\alpha, M, N} := \bigl\{ P = (\sigma, f, F) \colon \sigma \in \mathring{W}_2^{\al}[0,\pi], \, \| \sigma \|_{\al} \le Q, \, f \in \mathcal R_{M, Q, \de}, \, F \in \mathcal R_{N, Q, \de}, \, \lambda_1(P) \ge 1\bigr\}.
\end{equation}
We impose the technical restriction $\la_1(P) \ge 1$ according to the previous studies \cite{SS10, Hryn11}, since it simplifies formulations. In general, the spectrum of the boundary value problem $\mathcal L(P)$ is bounded from below.

For $R > 0$ and $\eps > 0$,
denote by $\mathcal B_{R, \eps}^{\al, M, N}$ the set of sequences $\mathfrak S = \{ \la_n, \ga_n \}_{n \ge 1}$ satisfying the asymptotics \eqref{asympt} and the additional requirements:
\begin{gather} \label{reqBla}
\la_1 \ge 1, \quad \sqrt{\la_{n+1}} - \sqrt{\la_n} \ge \eps, \: n \ge 1, \quad \| \{ \kappa_n \}_{n \ge 1} \|_{\al} \le R, \\ \label{reqBga}
1 + \be_n \ge \eps, \: n \ge 1, \quad \| \{ \be_n \}_{n \ge 1} \|_{\al} \le R.
\end{gather}

For brevity, we sometimes omit the indices $\al, M, N$: $\mathcal P_{Q, \de} := \mathcal P_{Q, \de}^{\al, M, N}$, $\mathcal B_{R, \eps} := \mathcal B_{R, \eps}^{\al, M, N}$.

Denote by $\mathscr S$ the spectral transform that maps $P = (\sigma, f, F)$ to the corresponding spectral data $\mathfrak S = \{ \la_n, \ga_n \}_{n \ge 1}$. Our main results on the uniform stability are formulated as follows.

\begin{thm}[Uniform stability for the direct problem] \label{thm:unidir}
Let $Q > 0$ and $\de > 0$. Then $\mathscr S$ maps $\mathcal P_{Q, \de}$ into $\mathcal B_{R, \eps}$, where $R > 0$ and $\eps > 0$ depend on $Q$ and $\de$. Moreover, the direct spectral transform $\mathscr S$ is Lipschitz continuous on $\mathcal P_{Q, \de}$, that is,
\begin{equation} \label{unidir}
\varrho_{\al}(\mathfrak S_1, \mathfrak S_2) \le C(Q, \de) \, \mathbf{d}_{\al} (P_1, P_2)
\end{equation}
for any $P_i \in \mathcal P_{Q, \de}$, $\mathfrak S_i := \mathscr S (P_i)$, $i = 1, 2$.
\end{thm}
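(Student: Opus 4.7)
The plan is to combine two ingredients: the Darboux-type transforms of Guliyev, whose Lipschitz continuity will be established in Section~\ref{sec:trans}, and the uniform stability for the Dirichlet Sturm-Liouville problem proved by Savchuk and Shkalikov \cite{SS10}. Guliyev's transforms successively eliminate the Herglotz-Nevanlinna summands in the boundary conditions, reducing $\mathcal{L}(\sigma,f,F)$ in finitely many steps to an auxiliary Dirichlet problem $\mathcal{L}(\tilde\sigma,\iy,\iy)$. Correspondingly, the spectral data of the two problems differ by a finite list of insertions/removals whose locations and weights are explicit rational functions of the coefficients of $f$ and $F$.

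First I would verify the inclusion $\mathscr{S}(\mathcal{P}_{Q,\de}) \subset \mathcal{B}_{R,\eps}$. The $l_2^{\al}$-bounds $\|\{\kappa_n\}\|_{\al},\|\{\be_n\}\|_{\al} \le R$ follow from Theorem~\ref{thm:nsc} made quantitative, with constants depending only on $Q$ and $\de$ through Lemma~\ref{lem:compRM} and Proposition~\ref{prop:embed}. The lower bounds $\sqrt{\la_{n+1}} - \sqrt{\la_n} \ge \eps$ and $1 + \be_n \ge \eps$ I would handle by a two-scale split: for $n \ge N_0(Q,\de)$, the inequality $\|\{\kappa_n\}\|_\al \le R$ yields the pointwise estimate $|\kappa_n| \le R\, n^{-\al}$, so $|\kappa_{n+1}-\kappa_n| < \tfrac{1}{2}$ and hence $\sqrt{\la_{n+1}} - \sqrt{\la_n} \ge \tfrac12$; similarly $1 + \be_n \ge \tfrac12$. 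For the finitely many remaining indices $n < N_0$, joint continuity of $(\la_n,\ga_n)$ in $P$ combined with precompactness of $\mathcal{P}_{Q,\de}$ in a suitable weak topology (from the compact embedding $\mathring W_2^\al \hookrightarrow L_2$ and Lemma~\ref{lem:compRM}) delivers a uniform positive minimum.

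For the Lipschitz estimate \eqref{unidir}, I would iterate the Darboux transforms on $P_1$ and $P_2$ simultaneously to pass to Dirichlet problems with potentials $\tilde\sigma_i \in \mathring W_2^\al[0,\pi]$ satisfying $\|\tilde\sigma_1 - \tilde\sigma_2\|_\al \le C(Q,\de)\,\mathbf{d}_\al(P_1,P_2)$, using the Lipschitz continuity of the transforms that is the goal of Section~\ref{sec:trans}. Applying the Savchuk-Shkalikov uniform stability to the transformed Dirichlet data and then reversing the finite chain of insertions — each of which contributes terms controlled by $\|c(f_1)-c(f_2)\|$ or $\|c(F_1)-c(F_2)\|$ via the formulas \eqref{HN}, \eqref{ffrac} and the auxiliary Lipschitz lemmas of Section~\ref{sec:aux} — would yield \eqref{unidir}.

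The main obstacle will be the uniformity of the constants under the Darboux iterations. Each step divides by quantities such as $\vv(x,\la_*)$ or derived norming-type factors, which must stay uniformly bounded away from zero across $\mathcal{P}_{Q,\de}$. It is precisely the lower bound $\de$ on the residues and on the gaps between poles of the Herglotz-Nevanlinna functions, together with $\la_1 \ge 1$ and the spectral gap already established in the inclusion step, that prevents these divisors from degenerating; verifying this quantitative non-degeneracy is where the bulk of the technical work will lie.
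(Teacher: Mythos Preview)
Your proposal is correct and follows essentially the same strategy as the paper: reduce to the Dirichlet case of \cite{SS10} via the Darboux-type transforms of Section~\ref{sec:trans}, using their Lipschitz continuity on $\mathcal P_{Q,\de}$ (Lemmas~\ref{lem:uniT-}--\ref{lem:uniS+}) together with the auxiliary estimates of Section~\ref{sec:aux}. The paper organizes this as a formal induction on $M+N$, handling both the inclusion $\mathscr S(\mathcal P_{Q,\de})\subset\mathcal B_{R,\eps}$ and the Lipschitz bound simultaneously at each step via the scheme $P \xrightarrow{\mathbf T_-} \widehat P \xrightarrow{\mathscr S} \widehat{\mathfrak S} \xrightarrow{\mathbf T_+} \mathfrak S$ (with $\mathbf T_{\pm\mp}$ when $M=-1$ or $N=-1$), rather than treating them separately as you outline; note also that the inserted data are $(\la_1(P),\ga_1(P))$, which depend on $\sigma$ as well and are controlled via Lemma~\ref{lem:la0}, not only on the coefficients of $f$ and $F$.
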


\begin{thm}[Uniform stability for the inverse problem] \label{thm:uniinv}
Let $R > 0$ and $\eps > 0$. Then $\mathscr S^{-1}$ maps $\mathcal B_{R, \eps}$ into $\mathcal P_{Q, \de}$, where $Q > 0$ and $\de > 0$ depend on $R$ and $\eps$.
Moreover, the inverse spectral transform $\mathscr S^{-1}$ is Lipschitz continuous on $\mathcal B_{R, \eps}$, that is,
\begin{equation} \label{uniinv}
\mathbf{d}_{\al} (P_1, P_2) \le C(R, \eps) \varrho_{\al}(\mathfrak S_1, \mathfrak S_2)
\end{equation}
for any $\mathfrak S_i \in \mathcal B_{R, \eps}$, $P_i := \mathscr S^{-1} (\mathfrak S_i)$, $i = 1, 2$.
\end{thm}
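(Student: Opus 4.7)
The plan is to reduce the problem to the Dirichlet case via Guliyev's Darboux-type transforms (developed in Section~\ref{sec:trans}) and then invoke the uniform stability theorem of Savchuk and Shkalikov \cite{SS10}. Given $\mathfrak S \in \mathcal B_{R,\eps}$, the asymptotics \eqref{asympt} uniquely fix the integers $M$ and $N$, so one knows a priori how many pole-stripping steps must be performed at each endpoint.

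First, I would construct an iterative procedure acting on the spectral data: at each step a single Darboux-type transform replaces $f$ by a Herglotz-Nevanlinna function of lower index (eventually $f=\infty$), modifies $\{\la_n,\ga_n\}$ by adding or deleting one eigenvalue and recomputing the adjacent norming constants via the explicit formulas of Section~\ref{sec:trans}, and produces the spectral data of a new problem $\mathcal L(\sigma^\ast, f^\ast, F)$. Iterating this on both endpoints yields the spectral data $\tilde{\mathfrak S}=\{\tilde\la_n,\tilde\ga_n\}_{n\ge 1}$ of a Dirichlet--Dirichlet problem. On $\mathcal B_{R,\eps}$, the conditions $\sqrt{\la_{n+1}}-\sqrt{\la_n}\ge\eps$ and $1+\be_n\ge\eps$ keep every denominator appearing in the transform formulas uniformly bounded away from zero, so the Lipschitz continuity of each individual step (to be established in Sections~\ref{sec:aux}--\ref{sec:trans}) can be composed inductively. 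This produces a map $\mathfrak S\mapsto\tilde{\mathfrak S}$ sending $\mathcal B_{R,\eps}$ into a Dirichlet data set $\tilde{\mathcal B}_{\tilde R,\tilde\eps}$ with $\tilde R,\tilde\eps$ depending only on $R,\eps$, and satisfying $\varrho_\al(\tilde{\mathfrak S}_1,\tilde{\mathfrak S}_2)\le C(R,\eps)\varrho_\al(\mathfrak S_1,\mathfrak S_2)$. The Savchuk--Shkalikov theorem then delivers potentials $\tilde\sigma_1,\tilde\sigma_2\in\mathring{W}_2^\al[0,\pi]$, uniformly bounded in $\|\cdot\|_\al$, with $\|\tilde\sigma_1-\tilde\sigma_2\|_\al\le C(R,\eps)\varrho_\al(\tilde{\mathfrak S}_1,\tilde{\mathfrak S}_2)$.

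The remaining task is to reconstruct $(\sigma,f,F)$ from $\tilde\sigma$ by inverting the Darboux chain. Each inverse step is parametrized by one pole/residue pair (or by the polynomial part $h_0,h$), and these parameters can be read off Lipschitz continuously from the spectral data of the corresponding intermediate problem via explicit formulas analogous to the residue identity used in the proof of Lemma~\ref{lem:subset}. Tracking this through all $M+N+2$ iterations yields Lipschitz dependence of both the intermediate potentials and the coefficient vectors $c(f),c(F)$ on the original data, which proves \eqref{uniinv}. The containment $P_i\in\mathcal P_{Q,\de}$ then follows from Lemma~\ref{lem:subset}: the images $\{c(f):\mathfrak S\in\mathcal B_{R,\eps}\}$ and $\{c(F):\mathfrak S\in\mathcal B_{R,\eps}\}$ are continuous images of a bounded closed set and automatically satisfy $\mathring\pi(f)\ge 1$ because the assumption $\la_1\ge 1$ bounds the lowest pole from below.

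The main obstacle I expect is the uniform control across the Darboux iterations. After each pole-stripping step, a new spectral sequence is produced, and one must verify that it still satisfies a separation condition $\sqrt{\tilde\la_{n+1}}-\sqrt{\tilde\la_n}\ge\tilde\eps(R,\eps)$ and positive lower bounds on norming constants, uniformly on $\mathcal B_{R,\eps}$. Because every transform is a rational operation in the data, any degeneration (coalescing eigenvalues or vanishing norming constants) would destroy the bounds. This is precisely why the lower constraint $\de$ on the residues $\de_j$ and on the gaps $h_{j+1}-h_j$ between the poles of $f,F$ in \eqref{RMQde1}--\eqref{RMQde2}, together with $\la_1\ge 1$, are indispensable: by Lemma~\ref{lem:compRM}, removing any of them would allow a sequence in $\mathcal R_M$ to run to the boundary of the parameter region, and at that boundary the Darboux construction would lose its Lipschitz character.
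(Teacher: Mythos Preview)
Your overall strategy matches the paper's: reduce to the Dirichlet case via the Darboux-type transforms of Section~\ref{sec:trans}, invoke Savchuk--Shkalikov \cite{SS10}, then invert the chain using the Lipschitz continuity established in Lemmas~\ref{lem:uniT-}--\ref{lem:uniS+}. The paper organizes this as an induction on $M+N$, but that is just your composition viewed one step at a time.

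However, two details of your description do not match the machinery actually available, and you should adjust them before writing the proof out. First, none of Guliyev's transforms acts on a single endpoint: $\mathbf{T}_-$ lowers \emph{both} $\ind f$ and $\ind F$ by one (removing the eigenvalue $\la_1$), while $\mathbf{T}_{-+}$ and $\mathbf{T}_{+-}$ shift one index up and the other down without changing the spectrum. So you cannot ``iterate on both endpoints'' independently to reach $\mathcal L(\sigma^\ast,f^\ast,F)$ with $F$ untouched; instead you apply $\mathbf{T}_-$ repeatedly when $M,N\ge 0$ and use $\mathbf{T}_{\pm\mp}$ to handle the boundary cases $M=-1$ or $N=-1$, exactly as in the paper's induction scheme. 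Second, the parameters of the inverse step $\mathbf{T}_+$ are not a pole/residue pair of $f$ but the pair $(\la_1,\ga_1)$ taken directly from the original data $\mathfrak S$ (see \eqref{T+-} and Lemma~\ref{lem:uniT+}); their Lipschitz dependence on $\mathfrak S$ is immediate from the metric $\varrho_\al$, with no residue formula needed. With these corrections (and the iteration count $(M+N+2)/2$ rather than $M+N+2$), your outline becomes the paper's proof.
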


Theorems~\ref{thm:unidir} and~\ref{thm:uniinv} generalize the results of \cite{SS10} to the problem \eqref{eqv}--\eqref{bc} with rational Herglotz-Nevanlinna functions in the boundary conditions. Their proofs rely on the Lipschitz continuity of the Darboux-type transforms constructed in \cite{Gul19}. Note that the parameter $\de > 0$ is unimportant for $M, N \in \{ -1, 0 \}$. Therefore, this constraint does not appear in the previous studies \cite{SS10, SS13, Hryn11} for constant coefficients.  

Theorems~\ref{thm:unidir} and~\ref{thm:uniinv} establish the unconditional uniform stability for the direct and the inverse problems, respectively. This means that the constants in the estimates \eqref{unidir} and \eqref{uniinv} depend on the constraints for the initial data of the corresponding problem (i.e. on $P$ in \eqref{unidir} and on $\mathfrak S$ in \eqref{uniinv}). Using Theorems~\ref{thm:unidir} and~\ref{thm:uniinv} together, we get the following corollary on the conditional uniform stability for the direct problem and the inverse problem. Therein, the constants in the estimates depend on the constraints for the solution.

\begin{cor} \label{cor:uni}
(i) For any $P_i \in \mathcal P_{Q,\de}$ and $\mathfrak S_i := \mathscr S(P_i)$, $i = 1, 2$, there holds
$$
\mathbf{d}_{\al} (P_1, P_2) \le C(Q, \de) \varrho_{\al}(\mathfrak S_1, \mathfrak S_2).
$$
(ii) For any $\mathfrak S_i \in \mathcal B_{R,\eps}$ and $P_i := \mathscr S^{-1}(\mathfrak S_i)$, there holds
$$
\varrho_{\al}(\mathfrak S_1, \mathfrak S_2) \le C(R, \eps) \, \mathbf{d}_{\al} (P_1, P_2).
$$
\end{cor}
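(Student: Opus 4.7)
The plan is to derive both assertions by chaining Theorems \ref{thm:unidir} and \ref{thm:uniinv}, exploiting the fact that each of them supplies not merely a Lipschitz estimate but also a mapping property placing its output into the hypothesis set of the other theorem. Combined with the bijectivity of $\mathscr S$ between $\mathcal P^{\al, M, N}$ and the corresponding space of admissible sequences (which is guaranteed by Theorem \ref{thm:nsc} together with the inverse uniqueness result of \cite{Gul19}), this reduces the corollary to a purely formal composition.

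For part (i), I would begin with $P_1, P_2 \in \mathcal P_{Q,\de}$. The first assertion of Theorem \ref{thm:unidir} places $\mathfrak S_i = \mathscr S(P_i)$ into a common set $\mathcal B_{R,\eps}$ with $R = R(Q,\de)$ and $\eps = \eps(Q,\de)$. Feeding this pair into Theorem \ref{thm:uniinv} then yields
\[
\mathbf{d}_\al(P_1, P_2) \le C(R, \eps)\, \varrho_\al(\mathfrak S_1, \mathfrak S_2),
\]
and absorbing the dependence of $R, \eps$ on $Q, \de$ into the constant gives the required estimate with $C(Q, \de)$. Part (ii) is entirely symmetric: starting from $\mathfrak S_1, \mathfrak S_2 \in \mathcal B_{R,\eps}$, Theorem \ref{thm:uniinv} puts $P_i := \mathscr S^{-1}(\mathfrak S_i)$ into some $\mathcal P_{Q,\de}$ with $Q = Q(R,\eps)$ and $\de = \de(R,\eps)$; Theorem \ref{thm:unidir} applied to this pair produces $\varrho_\al(\mathfrak S_1, \mathfrak S_2) \le C(Q, \de)\, \mathbf{d}_\al(P_1, P_2)$, which becomes the claimed bound with constant $C(R, \eps)$.

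I do not anticipate any substantive obstacle: all the analytic work has already been carried out in Theorems \ref{thm:unidir} and \ref{thm:uniinv}, and the present corollary is purely a matter of threading the mapping properties through one another. The only point worth a sentence of care is to note explicitly that $\mathfrak S_i \in \mathcal B_{R(Q,\de),\eps(Q,\de)}$ (resp. $P_i \in \mathcal P_{Q(R,\eps),\de(R,\eps)}$) so that the hypothesis of the subsequently invoked theorem is indeed satisfied.
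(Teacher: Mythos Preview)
Your proposal is correct and matches exactly the approach intended by the paper, which does not spell out a separate proof but simply states that the corollary follows by using Theorems~\ref{thm:unidir} and~\ref{thm:uniinv} together. The chaining you describe---using the mapping property from one theorem to place the data into the hypothesis set of the other, then invoking the latter's Lipschitz estimate---is precisely the intended argument.
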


Suppose that we know a finite number of spectral data $\{ \la_n, \ga_n \}_{n = 1}^m$ for the problem $\mathcal L(P)$, $P = (\sigma, f, F) \in \mathcal P^{\al, M, N}$. Then, we can complete them as follows:
\begin{equation} \label{defsdm}
\la_{n,m} := \begin{cases}
                \la_n, & n \le m, \\
                (n - \frac{M+N}{2} - 1)^2, & n > m, 
            \end{cases}
\quad 
\ga_{n,m} := \begin{cases}
                \ga_n, & n \le m, \\
                \frac{\pi}{2} n^{2M}, & n > m,
            \end{cases} \quad
\mathfrak S_m := \{ \la_{n,m}, \ga_{n,m} \}_{n \ge 1},
\end{equation}
and obtain the finite data approximation $P_m = (\sigma_m, f_m, F_m) := \mathscr S^{-1}(\mathfrak S_m)$ of $P$.

\begin{thm} \label{thm:fin1}
Let $0 < \al_1 < \al_2 < \tfrac{1}{2}$, $M, N \ge -1$ be fixed, and
$Q, \de > 0$. Then, for every $P \in \mathcal P_{Q,\de}^{\al_2, M, N}$, the finite data approximation satisfies the uniform stability estimate
\begin{equation} \label{fin1}
\mathbf{d}_{\al_1}(P, P_m) \le C(Q,\de) m^{\al_1 - \al_2}
\end{equation}
for all sufficiently large $m$.
\end{thm}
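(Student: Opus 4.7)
The plan is to reduce Theorem~\ref{thm:fin1} to the inverse stability result of Theorem~\ref{thm:uniinv} applied in the weaker $\al_1$-scale, after controlling the tail error $\varrho_{\al_1}(\mathfrak S, \mathfrak S_m)$, where $\mathfrak S := \mathscr S(P)$. Since $P \in \mathcal P_{Q,\de}^{\al_2, M, N}$, Theorem~\ref{thm:unidir} in the $\al_2$-scale places $\mathfrak S$ in some $\mathcal B_{R,\eps}^{\al_2, M, N}$ with $R, \eps > 0$ depending only on $Q, \de$. In particular, the remainders of the asymptotics \eqref{asympt} satisfy $\|\{\kappa_n\}\|_{\al_2} + \|\{\be_n\}\|_{\al_2} \le 2R$, together with $\la_1 \ge 1$, $\sqrt{\la_{n+1}} - \sqrt{\la_n} \ge \eps$, and $1 + \be_n \ge \eps$.

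The first step is to bound $\varrho_{\al_1}(\mathfrak S, \mathfrak S_m)$ by a tail sum. By the definition \eqref{defsdm}, the remainders of $\mathfrak S_m$ coincide with those of $\mathfrak S$ for $n \le m$ and vanish for $n > m$, so
\[
\varrho_{\al_1}(\mathfrak S, \mathfrak S_m)^2 \le 2 \sum_{n > m} n^{2\al_1}(\kappa_n^2 + \be_n^2).
\]
Writing $n^{2\al_1} = n^{2\al_2} \cdot n^{-2(\al_2-\al_1)}$ and using $n^{-2(\al_2-\al_1)} \le m^{-2(\al_2-\al_1)}$ for $n > m$ produces $\varrho_{\al_1}(\mathfrak S, \mathfrak S_m) \le C(Q,\de) \, m^{\al_1 - \al_2}$.

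The second step is to verify that $\mathfrak S$ and $\mathfrak S_m$ lie in a common set $\mathcal B_{R', \eps'}^{\al_1, M, N}$ with $R', \eps'$ depending only on $Q, \de$, provided $m$ is sufficiently large. The trivial embedding $l_2^{\al_2} \hookrightarrow l_2^{\al_1}$ bounds the $\al_1$-norms of both sets of remainders by $2R$. The pointwise constraints for $n \le m$ are inherited from $\mathfrak S$, and for $n > m$ they are trivial since $1 + \be_{n,m} = 1$ and the model values $\sqrt{\la_{n,m}}$ form an arithmetic progression with step $1$. The only delicate point is the gap at the junction $n = m$:
\[
\sqrt{\la_{m+1,m}} - \sqrt{\la_{m,m}} = 1 - \kappa_m.
\]
Since $\kappa_m^2 \le m^{-2\al_2} \|\{\kappa_n\}\|_{\al_2}^2 \le (2R)^2 m^{-2\al_2}$, this difference exceeds $\tfrac{1}{2}$ once $m$ is beyond a threshold depending only on $Q, \de$, and one sets $\eps' := \min(\eps, \tfrac{1}{2})$.

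Combining the two steps, Theorem~\ref{thm:uniinv} applied on $\mathcal B_{R', \eps'}^{\al_1, M, N}$ delivers
\[
\mathbf d_{\al_1}(P, P_m) \le C(R', \eps') \, \varrho_{\al_1}(\mathfrak S, \mathfrak S_m) \le C(Q, \de) \, m^{\al_1 - \al_2},
\]
which is \eqref{fin1}. The main technical obstacle is the second step: one must verify quantitatively, using the decay of $\kappa_m$ forced by $\{\kappa_n\} \in l_2^{\al_2}$, that the junction gap at $n = m$ stays bounded away from zero uniformly for $P \in \mathcal P_{Q,\de}^{\al_2, M, N}$. This is precisely what forces the phrase ``for all sufficiently large $m$'' into the statement.
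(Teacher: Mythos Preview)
Your proof is correct and follows essentially the same route as the paper: both arguments place $\mathfrak S$ and $\mathfrak S_m$ in a common $\mathcal B^{\al_1}_{R',\eps'}$ for large $m$, bound the tail $\varrho_{\al_1}(\mathfrak S,\mathfrak S_m)$ via the $l_2^{\al_2}$-control of $\{\kappa_n\},\{\be_n\}$, and then invoke the inverse stability in the $\al_1$-scale. The only cosmetic differences are that the paper phrases the last step through Corollary~\ref{cor:uni}(i) (after first pushing $P_m$ into some $\mathcal P_{\widehat Q,\widehat\de}$ via Theorem~\ref{thm:uniinv}) rather than applying Theorem~\ref{thm:uniinv} directly, and that you spell out the junction-gap check $\sqrt{\la_{m+1,m}}-\sqrt{\la_{m,m}}=1-\kappa_m$ which the paper leaves implicit in the phrase ``for sufficiently large $m$''.
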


Note that the right-hand side of \eqref{fin1} tends to zero as $m \to \infty$ uniformly by $P \in \mathcal P_{Q,\de}$.

Finally, consider the case when the finite spectral data are known with an error at most $\eps > 0$:
\begin{equation} \label{esteps}
|\tilde \la_n - \la_n| \le \eps, \quad |\tilde \ga_n - \ga_n| \le \eps, \quad n = \overline{1,m}.
\end{equation}
Then, find the finite data approximation $\tilde P_m := \mathscr S^{-1}(\tilde{\mathfrak S}_m)$ by using the completion $\tilde {\mathfrak S}_m = \{ \tilde \la_{n,m}, \tilde \ga_{n,m} \}_{n \ge 1}$:
$$
\tilde \la_{n,m} := \begin{cases}
                \tilde \la_n, & n \le m, \\
                (n - \frac{M+N}{2} - 1)^2, & n > m, 
            \end{cases}
\quad 
\tilde \ga_{n,m} := \begin{cases}
                \tilde \ga_n, & n \le m, \\
                \frac{\pi}{2} n^{2M}, & n > m.
            \end{cases}
$$

We obtain the following theorem about the uniform stability of the approximation $\tilde P_m$.

\begin{thm} \label{thm:fin2}
Let $0 < \al_1 < \al_2 < \tfrac{1}{2}$, $M, N \ge -1$ be fixed, and
$Q, \de > 0$. Then, there exists $\eps_0 > 0$ such that, for every $P \in \mathcal P_{Q,\de}^{\al_2, M, N}$, $\eps \in (0,\eps_0]$, and all sufficiently large $m$, the following estimate holds:
\begin{equation} \label{fin2}
\mathbf d_{\al_1}(P, \tilde P_m) \le C(Q,\de) \left( \eps m^{\al_1 + \frac{1}{2} - \min\{ 1, 2 M \}} + m^{\al_1 - \al_2} \right). 
\end{equation}
\end{thm}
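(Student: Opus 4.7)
The natural plan is a triangle inequality through the noise-free finite-data approximation $P_m = \mathscr{S}^{-1}(\mathfrak{S}_m)$ of Theorem~\ref{thm:fin1}:
\[
\mathbf{d}_{\al_1}(P,\tilde P_m) \le \mathbf{d}_{\al_1}(P,P_m) + \mathbf{d}_{\al_1}(P_m,\tilde P_m).
\]
Theorem~\ref{thm:fin1} bounds the first summand directly by $C(Q,\de)\,m^{\al_1-\al_2}$, which already gives the second term on the right-hand side of \eqref{fin2}. For the second summand, I would apply the uniform inverse stability of Theorem~\ref{thm:uniinv} to the pair $(\mathfrak{S}_m,\tilde{\mathfrak{S}}_m)$, provided both sequences lie in a common set $\mathcal{B}_{R_1,\eps_1}^{\al_1,M,N}$ whose parameters depend only on $Q,\de$.

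The preliminary step is this uniform membership. By Theorem~\ref{thm:unidir}, the exact data $\mathfrak{S}=\mathscr{S}(P)$ lie in $\mathcal{B}_{R_0,\eps_0'}^{\al_2,M,N}$ with $R_0,\eps_0'$ depending only on $Q,\de$, and since $\al_1<\al_2$ the corresponding $l_2^{\al_1}$ norms are inherited. The completion $\mathfrak{S}_m$ coincides with $\mathfrak{S}$ on the head $n\le m$ and has vanishing remainders with unit gap on the tail $n>m$; the junction gap $\sqrt{\la_{m+1,m}}-\sqrt{\la_m}=1-\kappa_m$ stays above $\eps_1$ for $m$ large because $\kappa_m\to 0$. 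For $\tilde{\mathfrak{S}}_m$, the pointwise perturbations
\[
|\sqrt{\tilde\la_n}-\sqrt{\la_n}| \le \frac{\eps}{\sqrt{\la_n}+\sqrt{\tilde\la_n}}, \qquad |\tilde\be_n-\be_n|=\frac{2|\tilde\ga_n-\ga_n|}{\pi n^{2M}},
\]
combined with the smallness $\eps\le\eps_0$ (chosen from $Q,\de$), preserve the gap condition, the positivity $1+\tilde\be_n\ge\eps_1$, and the $l_2^{\al_1}$ bounds on the head, while the tail is unchanged.

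Next, I would estimate $\varrho_{\al_1}(\mathfrak{S}_m,\tilde{\mathfrak{S}}_m)$. The differences vanish for $n>m$, and for $n\le m$ the pointwise bounds together with $\sqrt{\la_n}\ge cn$ (from the asymptotics and $\la_1\ge 1$) yield $|\kappa_n-\tilde\kappa_n|\le C\eps/n$ and $|\be_n-\tilde\be_n|\le C\eps/n^{2M}$. Substituting into the definition \eqref{metrS} of $\varrho_{\al_1}$ gives
\[
\|\{\kappa_n-\tilde\kappa_n\}_{n\le m}\|_{\al_1}^2 \le C\eps^2\sum_{n=1}^m n^{2\al_1-2}, \qquad \|\{\be_n-\tilde\be_n\}_{n\le m}\|_{\al_1}^2 \le C\eps^2\sum_{n=1}^m n^{2\al_1-4M}.
\]
Evaluating these partial sums (each is either bounded by a convergent series or polynomial in $m$, depending on the sign of the exponent) and taking the dominating contribution yields $\varrho_{\al_1}(\mathfrak{S}_m,\tilde{\mathfrak{S}}_m)\le C(Q,\de)\,\eps\,m^{\al_1+\frac12-\min\{1,2M\}}$, where the $\be$-sum dominates for $M\le 0$ and the $\kappa$-sum matches the $\be$-sum for larger $M$. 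Applying Theorem~\ref{thm:uniinv} to this estimate and combining with the $m^{\al_1-\al_2}$ bound from the triangle inequality yields \eqref{fin2}.

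The principal obstacle is the uniform calibration: choosing $\eps_0$ so small and $m$ so large that $\tilde{\mathfrak{S}}_m$ provably lies in $\mathcal{B}_{R_1,\eps_1}$ with $R_1,\eps_1$ \emph{independent of both $\eps$ and $m$}. For $M\le 0$ the $\be$-perturbation can grow with $n$, so preserving the positivity $1+\tilde\be_n\ge\eps_1$ and the $l_2^{\al_1}$ bound along the truncation requires careful balancing of the smallness of $\eps$ against the range $n\le m$. Once this calibration is secured, the inverse Lipschitz estimate of Theorem~\ref{thm:uniinv} applies with a constant depending only on $Q,\de$, and the remainder of the argument is the pointwise perturbation bookkeeping sketched above.
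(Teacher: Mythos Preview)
Your proposal is correct and follows essentially the same approach as the paper. The only cosmetic difference is that the paper applies the uniform stability once, directly to the pair $(\mathfrak S,\tilde{\mathfrak S}_m)$, and then splits $\varrho_{\al_1}(\mathfrak S,\tilde{\mathfrak S}_m)$ into the tail $n>m$ (yielding $m^{\al_1-\al_2}$ via \eqref{est2}) and the head $n\le m$ (using the same pointwise bounds $|\kappa_n-\tilde\kappa_n|\le C\eps n^{-1}$, $|\be_n-\tilde\be_n|\le C\eps n^{-2M}$ that you derive), whereas you perform the triangle inequality at the level of $P$ and invoke Theorem~\ref{thm:fin1} and Theorem~\ref{thm:uniinv} separately; the underlying estimates and the calibration issue you flag are identical in both arguments.
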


\section{Auxiliary lemmas} \label{sec:aux}

In this section, we prove several lemmas on the Lipschitz continuity by $P \in \mathcal P_{Q,\de}$ for some auxiliary values.

\begin{lem} \label{lem:unisol}
Let $y(x, \la)$ be the solution of equation \eqref{eqv} satisfying the initial conditions
\begin{equation} \label{icy}
y(x_0) = a, \quad y^{[1]}_{\sigma}(x_0) = b, \quad x_0 \in [0,\pi].
\end{equation}
Then, the maps $(\sigma, a, b, \la) \mapsto y$ and $(\sigma, a, b, \la) \mapsto y^{[1]}_{\sigma}$ from $L_2[0,\pi] \times \mathbb R^3$ to $C[0, \pi]$ are analytic and Lipschitz continuous on bounded sets.
\end{lem}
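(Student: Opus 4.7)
The plan is to rewrite equation \eqref{eqv} as a first-order linear system for the vector $\mathbf w := (y, y^{[1]}_\sigma)^\top$. Setting $u = y$ and $v = y^{[1]}_\sigma = y' - \sigma y$, equation \eqref{eqv} becomes
$$
\mathbf w'(x) = A(x;\sigma,\la)\mathbf w(x), \qquad A(x;\sigma,\la) := \begin{pmatrix} \sigma(x) & 1 \\ -\sigma^2(x) - \la & -\sigma(x) \end{pmatrix},
$$
with initial data $\mathbf w(x_0) = (a,b)^\top$. Since $\sigma \in L_2[0,\pi]$ implies $\sigma, \sigma^2 \in L_1[0,\pi]$, this Cauchy problem is equivalent to the Volterra integral equation $\mathbf w(x) = (a,b)^\top + \int_{x_0}^x A(t;\sigma,\la)\mathbf w(t)\,dt$, which I would analyze in the Banach space $C[0,\pi]^2$.

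The first step is a standard a priori bound. Using the Picard iteration $\mathbf w^{(0)} \equiv (a,b)^\top$, $\mathbf w^{(n+1)}(x) := (a,b)^\top + \int_{x_0}^x A(t;\sigma,\la)\mathbf w^{(n)}(t)\,dt$, and the inequality $\|A(\cdot;\sigma,\la)\|_{L_1[0,\pi]} \le C(1 + \|\sigma\|_{L_2} + \|\sigma\|_{L_2}^2 + |\la|)$ (note $\|\sigma^2\|_{L_1} = \|\sigma\|_{L_2}^2$), the classical Volterra estimate $\|\mathbf w^{(n+1)} - \mathbf w^{(n)}\|_{C[0,\pi]} \le K^n/n!\cdot(|a|+|b|)$ gives uniform convergence and the bound $\|\mathbf w\|_{C[0,\pi]} \le C(|a|+|b|)$ with $C$ depending only on $\|\sigma\|_{L_2}$ and $|\la|$.

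For the Lipschitz continuity on bounded sets, consider two solutions $\mathbf w_i$ associated with $(\sigma_i, a_i, b_i, \la_i)$, $i = 1,2$, from a fixed bounded set. The difference $\Delta\mathbf w := \mathbf w_1 - \mathbf w_2$ satisfies
$$
\Delta\mathbf w(x) = \Delta\mathbf w_0 + \int_{x_0}^x A(t;\sigma_1,\la_1)\Delta\mathbf w(t)\,dt + \int_{x_0}^x \bigl[A(t;\sigma_1,\la_1) - A(t;\sigma_2,\la_2)\bigr]\mathbf w_2(t)\,dt.
$$
All entries of $A(\cdot;\sigma_1,\la_1) - A(\cdot;\sigma_2,\la_2)$ are linear in the parameter differences except $\sigma_1^2 - \sigma_2^2$, which I would factor as $(\sigma_1+\sigma_2)(\sigma_1-\sigma_2)$; by Cauchy--Schwarz its $L_1$-norm is bounded by $(\|\sigma_1\|_{L_2} + \|\sigma_2\|_{L_2})\|\sigma_1 - \sigma_2\|_{L_2}$. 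Combined with the a priori bound on $\mathbf w_2$, Gronwall's inequality then yields $\|\Delta\mathbf w\|_{C[0,\pi]} \le C\bigl(\|\sigma_1-\sigma_2\|_{L_2} + |a_1-a_2| + |b_1-b_2| + |\la_1-\la_2|\bigr)$ with $C$ depending only on the bounds of the parameters. This quadratic nonlinearity $\sigma^2$ is the main obstacle, as it prevents a purely linear Gronwall argument, but the factorization trick together with the uniform $L_2$-bound on $\sigma_i$ resolves it.

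For analyticity, I would observe that each Picard iterate $\mathbf w^{(n)}$ is a polynomial map (of degree at most $2n+1$) of $(\sigma, a, b, \la) \in L_2[0,\pi]\times\mathbb R^3$, since $A$ is polynomial of degree $\le 2$ in $\sigma$ and degree $1$ in $\la$, while $(a,b)$ enter linearly. Bounded polynomial maps between Banach spaces are entire-analytic, and the Volterra estimate from the first step shows that the series $\sum_n(\mathbf w^{(n+1)} - \mathbf w^{(n)})$ converges uniformly on every bounded subset of $L_2[0,\pi]\times\mathbb R^3$. Analyticity of the limit then follows from the Weierstrass theorem for Banach-valued analytic maps. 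Reading off the two components of $\mathbf w$ gives the claim for both $y$ and $y^{[1]}_\sigma$.
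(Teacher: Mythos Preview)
Your argument is correct and follows essentially the same route as the paper: reduce \eqref{eqv} to the first-order system for $(y, y^{[1]}_\sigma)$, pass to the Volterra integral equation, and solve by Picard iteration. The paper's proof is a brief sketch of exactly this strategy (citing \cite{SS99} for the continuity in $\sigma$), while you have supplied the details, including the factorization $\sigma_1^2-\sigma_2^2=(\sigma_1+\sigma_2)(\sigma_1-\sigma_2)$ needed for the Lipschitz estimate and the polynomial structure of the iterates for analyticity.
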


\begin{proof}
The initial value problem \eqref{eqv}, \eqref{icy} is equivalent to the first-order system
$$
\frac{d}{dx}
\begin{bmatrix}
y \\ y^{[1]}_{\sigma}
\end{bmatrix} = 
\begin{bmatrix}
\sigma & 1 \\
-(\sigma^2 + \la) & -\sigma
\end{bmatrix} \begin{bmatrix}
y \\ y^{[1]}_{\sigma}
\end{bmatrix}, \quad
\begin{bmatrix}
y(x_0) \\ y^{[1]}_{\sigma}(x_0)
\end{bmatrix} = 
\begin{bmatrix}
a \\ b 
\end{bmatrix}.
$$
Reducing this system to an integral equation and solving it by iterations yield the claim. Note that the continuity for solutions of equation \eqref{eqv} with respect to $\sigma$ has been proved in \cite{SS99}.
\end{proof}

\begin{lem} \label{lem:Pcomp}
The set $\mathcal P_{Q,\de}^{\al, M, N}$ for $\al > 0$ is compact in $\mathcal P^{0,M,N}$.
\end{lem}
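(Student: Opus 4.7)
The plan is to verify sequential compactness directly: given an arbitrary sequence $\{P_k\}_{k \ge 1} = \{(\sigma_k, f_k, F_k)\} \subset \mathcal P_{Q,\de}^{\al,M,N}$, I will extract a subsequence converging in $\mathcal P^{0,M,N}$ to some $P^* = (\sigma^*, f^*, F^*)$, and then show $P^* \in \mathcal P_{Q,\de}^{\al,M,N}$.

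The rational-function components are the easy part. Since $f_k \in \mathcal R_{M,Q,\de}$ and $F_k \in \mathcal R_{N,Q,\de}$, Lemma~\ref{lem:compRM} gives compactness of $\{c(f_k)\}$ in $\mathbb R^{M+1}$ and of $\{c(F_k)\}$ in $\mathbb R^{N+1}$, so after passing to a subsequence we get $c(f_k) \to c(f^*)$ and $c(F_k) \to c(F^*)$ with $f^* \in \mathcal R_{M,Q,\de}$ and $F^* \in \mathcal R_{N,Q,\de}$ (these sets are images of compact sets under continuous maps, hence closed). For the potential, $\{\sigma_k\}$ is a bounded sequence in the Hilbert space $\mathring{W}_2^{\al}[0,\pi]$, so by reflexivity it has a weakly convergent subsequence $\sigma_k \rightharpoonup \sigma^*$; weak lower semicontinuity of the norm gives $\|\sigma^*\|_{\al} \le Q$, and the mean-value functional being continuous yields $\sigma^* \in \mathring{W}_2^{\al}[0,\pi]$. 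Then the compact embedding $\mathring{W}_2^{\al}[0,\pi] \hookrightarrow L_2[0,\pi]$ from Proposition~\ref{prop:embed}(i) upgrades weak convergence to strong convergence $\sigma_k \to \sigma^*$ in $L_2[0,\pi]$. Combined with $c(f_k) \to c(f^*)$ and $c(F_k) \to c(F^*)$, this gives $\mathbf{d}_0(P_k, P^*) \to 0$.

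The one remaining point, and the main obstacle, is verifying that the closed constraint $\la_1(P^*) \ge 1$ survives the limit. Here I use Lemma~\ref{lem:unisol}: the solution $\vv_k(x,\la)$ and its quasi-derivative depend continuously on $(\sigma_k, c(f_k), \la)$ via the initial data \eqref{icvv}, uniformly on compacts of $(x,\la)$. Combined with the definition \eqref{defchi} of $\chi(\la)$, this gives $\chi_k(\la) \to \chi^*(\la)$ uniformly on compact subsets of $\mathbb C$. Since the first eigenvalue $\la_{1,k}$ of $\mathcal L(P_k)$ can be bounded above uniformly by a constant $\Lambda = \Lambda(Q,\de)$ (e.g.\ by applying the same weak/strong compactness argument to extract a limit and invoking the asymptotics \eqref{asympt}; more concretely, the $L_2$ bound on $\sigma_k$ and the bounds on $f_k, F_k$ yield a uniform upper bound on $\la_1$ via a Rayleigh-type or comparison argument), after a further subsequence we have $\la_{1,k} \to \la^* \in [1, \Lambda]$, so $\chi^*(\la^*) = 0$. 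If $\chi^*$ had a zero $\mu < 1$, then by a Hurwitz/continuity of zeros argument applied to the entire functions $\chi_k \to \chi^*$, the functions $\chi_k$ would also have zeros converging to $\mu$, contradicting $\la_{1,k} \ge 1$. Hence $\la_1(P^*) = \la^* \ge 1$, and $P^* \in \mathcal P_{Q,\de}^{\al,M,N}$.

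The subtlety in the last step is the uniform upper bound on $\la_{1,k}$, which prevents the sequence from escaping to infinity. I expect that either a direct min-max characterization adapted to the eigenparameter-dependent setting, or an appeal to continuity of $\chi$ together with the already-proved spectral asymptotics of Theorem~\ref{thm:nsc} on a compact family, suffices; this is the only place where properties specific to the self-adjoint pencil \eqref{eqv}--\eqref{bc} are invoked beyond the purely topological structure.
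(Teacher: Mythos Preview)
Your approach matches the paper's: compactness of the ball in $\mathring W_2^{\al}$ under the embedding into $L_2$ (Proposition~\ref{prop:embed}(i)) together with Lemma~\ref{lem:compRM} for the rational-function factors. The paper's proof is two lines and does not explicitly treat the constraint $\la_1(P)\ge 1$; you go further and verify closedness of this constraint via the uniform convergence $\chi_k\to\chi^*$ and a Hurwitz argument, which is a legitimate addition.

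One simplification: your ``main obstacle'', the uniform upper bound on $\la_{1,k}$, is not actually needed for the conclusion $\la_1(P^*)\ge 1$. The Hurwitz step you already wrote down suffices on its own: $\chi^*$ is the characteristic function of a genuine problem $\mathcal L(P^*)$ (since $P^*\in\mathcal P^{0,M,N}$) and hence is a nonzero entire function; if it had a zero $\mu<1$, Hurwitz would force nearby zeros of $\chi_k$, contradicting $\la_{1,k}\ge 1$. Thus $\la_1(P^*)\ge 1$ follows immediately, and the extraction of a convergent subsequence of $\{\la_{1,k}\}$ and the identification $\la_1(P^*)=\la^*$ can be dropped. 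With this trimming your argument is complete and, on the eigenvalue constraint, more careful than the paper's.
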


\begin{proof}
By virtue of part (i) of Proposition~\ref{prop:embed}, the ball $\{ \sigma \in \mathring{W}_2^{\al}[0,\pi] \colon \| \sigma \|_{\al} \le Q \}$ is compact in $L_2[0,\pi]$. This fact together with Lemma~\ref{lem:compRM} and the definitions \eqref{metrd} and \eqref{defPQde} yield the claim.
\end{proof}

Note that $\mathcal P_{Q,\de}^{\al, M, N}$ is not compact in $\mathcal P^{\al, M, N}$. The compact embedding $W_2^{\al}[0,\pi] \subset L_2[0,\pi]$ is crucial.

\begin{lem} \label{lem:la0}
Let $Q > 0$ and $\de > 0$ be fixed. For any $P_i \in \mathcal P_{Q, \de}$, $i = 1, 2$, there holds
\begin{equation} \label{unila0}
|\la_1(P_1) - \la_1(P_2)| + |\ga_1(P_1) - \ga_1(P_2)| \le 
C(Q, \de) \mathbf{d}_0 (P_1, P_2).
\end{equation}
Thus, the maps $\la_1(P)$ and $\ga_1(P)$ are Lipschitz continuous on $\mathcal P_{Q, \de}^{\al, M, N}$ in the metric space $\mathcal P^{0,M,N}$.
\end{lem}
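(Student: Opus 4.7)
The plan is to reduce the Lipschitz dependence of $\la_1(P)$ and $\ga_1(P)$ on $P$ to the Lipschitz continuity of the characteristic function $\chi(\la,P)$ in $(P,\la)$, combined with a uniform lower bound on $|\partial_{\la}\chi(\la_1(P),P)|$ extracted from compactness. Applying Lemma~\ref{lem:unisol} at $x_0=0$ with the initial data $\vv(0,\la)=f_{\downarrow}(\la)$, $\vv^{[1]}_{\sigma}(0,\la)=-f_{\uparrow}(\la)$, I observe that these are polynomials in $\la$ whose coefficients are the components of $c(f)$ and hence are uniformly bounded on $\mathcal R_{M,Q,\de}$. For $\la$ in any bounded interval $[-\Lambda,\Lambda]$, Lemma~\ref{lem:unisol} then produces joint Lipschitz continuity of $\vv(\pi,\la,P)$ and $\vv^{[1]}_{\sigma}(\pi,\la,P)$ in $(P,\la)$ on $\mathcal P_{Q,\de}\times[-\Lambda,\Lambda]$; analyticity in $\la$ extends this to the $\la$-derivatives. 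Combined with the polynomial dependence of $F_{\uparrow},F_{\downarrow}$ on $c(F)$, one obtains the joint Lipschitz continuity of $\chi(\la,P)$ and $\partial_{\la}\chi(\la,P)$ on such bounded sets.

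Next I would invoke Lemma~\ref{lem:Pcomp}: the set $\mathcal P_{Q,\de}$ is compact in $\mathcal P^{0,M,N}$. The map $P\mapsto \la_1(P)$ is continuous on this compact (simple zeros of the analytic family $\chi(\cdot,P)$ depend continuously on parameters), hence $1\le \la_1(P)\le \Lambda_0(Q,\de)$ uniformly. Self-adjointness of $\mathcal L(P)$ guarantees that $\la_1(P)$ is a simple zero of $\chi(\cdot,P)$, so $\partial_{\la}\chi(\la_1(P),P)\ne 0$, and a second use of compactness promotes this to the uniform lower bound $|\partial_{\la}\chi(\la_1(P),P)|\ge c_0(Q,\de)>0$. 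The Lipschitz estimate for $\la_1$ then follows by a standard implicit-function argument: since $\chi(\la_1(P_2),P_2)=0$, the joint Lipschitz estimate from the first paragraph gives $|\chi(\la_1(P_2),P_1)|\le K\,\mathbf d_0(P_1,P_2)$, while the local bound $|\chi(\la,P_1)|\ge (c_0/2)|\la-\la_1(P_1)|$ on a neighbourhood $U$ of $\la_1(P_1)$ handles the case $\la_1(P_2)\in U$; for $\la_1(P_2)\notin U$, compactness forces $\mathbf d_0(P_1,P_2)\ge d_*(Q,\de)>0$, so the trivial bound $|\la_1(P_1)-\la_1(P_2)|\le 2\Lambda_0$ suffices.

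Finally, for $\ga_1$ I would substitute $\vv(0,\la_1)=f_{\downarrow}(\la_1)$ into \eqref{defga} and use the polynomial identity $f'(\la)f_{\downarrow}(\la)^2 = f_{\uparrow}'(\la)f_{\downarrow}(\la)-f_{\uparrow}(\la)f_{\downarrow}'(\la)$, so that $f'(\la_1)\vv^2(0,\la_1)$ is polynomial in $\la$ with Lipschitz-in-$c(f)$ coefficients and remains well-behaved at the poles of $f$. The $F$-term is treated analogously after eliminating the apparent singularity in $F'(\la_1)\vv^2(\pi,\la_1)$ by means of the characteristic equation $F_{\uparrow}(\la_1)\vv(\pi,\la_1)=F_{\downarrow}(\la_1)\vv^{[1]}_{\sigma}(\pi,\la_1)$. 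Composing with the Lipschitz dependence of $\la_1(P)$ established above yields the $\ga_1$-part of \eqref{unila0}. The main obstacle of the whole argument is the uniform lower bound on $|\partial_{\la}\chi(\la_1(P),P)|$: it is a uniform non-collision statement of $\la_1$ with $\la_2$ over $\mathcal P_{Q,\de}$ and relies essentially on the compact embedding $W_2^{\al}\hookrightarrow L_2$ supplied by Lemma~\ref{lem:Pcomp}, which is also why the constant $C(Q,\de)$ in \eqref{unila0} is inevitably non-explicit.
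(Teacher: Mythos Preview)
Your proposal is correct and follows essentially the same route as the paper: Lipschitz continuity of $\chi(\la,P)$ via Lemma~\ref{lem:unisol}, continuity of $\la_1(P)$, the uniform lower bound $|\partial_\la\chi(\la_1(P),P)|\ge c_0(Q,\de)$ from compactness (Lemma~\ref{lem:Pcomp}) plus simplicity, and then an implicit-function/Taylor argument for $\la_1$; the paper writes the Taylor expansion explicitly but the ingredients are identical. For $\ga_1$ the paper takes a shorter path than your polynomial-identity argument: it invokes the spectral fact (from \cite{Gul19}) that $\la_1<\min\{\mathring\pi(f),\mathring\pi(F)\}$, so $f_\downarrow(\la_1),F_\downarrow(\la_1)>0$, and then compactness upgrades this to $f_\downarrow(\la_1),F_\downarrow(\la_1)\ge\eps(Q,\de)$, making $f'(\la_1)$ and $F'(\la_1)$ directly Lipschitz without any rewriting --- this also dispenses with your characteristic-equation manipulation for the $F$-term.
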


\begin{proof}
According to Lemma~\ref{lem:unisol} and the initial conditions \eqref{icvv}, the maps $(\sigma, c(f), \la) \mapsto \vv(.,\la)$ and $(\sigma, c(f), \la) \mapsto \vv^{[1]}_{\sigma}(., \la)$ are analytic. Hence, the characteristic function $\chi(\la)$ given by \eqref{defchi} is analytic in $\sigma$, $c(f)$, $c(F)$, and $\la$. Therefore, $\chi(\la)$ is Lipschitz continuous for $P = (\sigma, f, F) \in \mathcal P_{Q, \de}$, and $\la$ on bounded sets. Obviously, the first zero $\la_1(P)$ of $\chi(\la)$ is continuous by $P$. 

Let us show the Lipschitz continuity of $\la_1$ using the simplicity of this eigenvalue. Consider a small perturbation $\tilde P = (\tilde \sigma, \tilde f, \tilde F)$ of $P = (\sigma, f, F)$ in $\mathcal P_{Q, \de}$. The Taylor formula implies
$$
\chi(\tilde \la_1) = \chi(\la_1) + \chi'(\la_1)(\tilde \la_1 - \la_1) + \frac{1}{2} \chi''(\xi) (\tilde \la_1 - \la_1)^2, 
$$
where $\xi = (1 - \theta) \la_1 + \theta \tilde \la_1$, $\theta \in (0,1)$. Taking $\chi(\la_1) = \tilde \chi(\tilde \la_1) = 0$ into account, we derive
\begin{equation} \label{sm1}
(\tilde \la_1 - \la_1) \left( \chi'(\la_1) + \frac{1}{2} \chi''(\xi)(\tilde \la_1 - \la_1)\right) = \chi(\tilde \la_1) - \tilde \chi(\tilde \la_1).
\end{equation}

The Lipschitz continuity of $\chi(\la)$ implies
\begin{equation} \label{sm2}
|\chi(\tilde\la_1) - \tilde \chi(\tilde \la_1)| \le C(Q, \de) \mathbf{d}_0 (P, \tilde P).
\end{equation}

Since $\chi(\la)$ is analytic and $\la_1(P)$ is continuous, then $\chi'(\la_1)$ is a continuous function on the set $\mathcal P_{Q, \de}$, which is compact according to Lemma~\ref{lem:Pcomp}. Due to the simplicity of $\la_1$, we have $\chi'(\la_1) \ne 0$, so 
\begin{equation} \label{estchi1}
|\chi'(\la_1)| \ge \eps > 0, \quad \eps = \eps(Q, \de).
\end{equation}
Analogously, we get 
\begin{equation} \label{estchi2}
|\chi''(\xi)| \le C(Q, \de), \quad \text{for}\:\: |\xi| \le R(Q,\de).
\end{equation}

Combining \eqref{sm1}, \eqref{sm2}, \eqref{estchi1}, and \eqref{estchi2}, we arrive at the estimate
\begin{equation} \label{estla0}
|\la_1 - \tilde \la_1| \le C(Q, \de) \, \mathbf{d}_0 (P, \tilde P).
\end{equation}

Remark~2.2 in \cite{Gul19} implies that $\la_1 < \min \{ \mathring{\pi}(f), \mathring{\pi}(F) \}$. This together with \eqref{ffrac} yield $f_{\downarrow}(\la_1) > 0$ and $F_{\downarrow}(\la_1) > 0$. Since $f_{\downarrow}(\la_1(P))$ and $F_{\downarrow}(\la_1(P))$ are continuous on $\mathcal P_{Q,\de}$, then $f_{\downarrow}(\la_1) \ge \eps$ and $F_{\downarrow}(\la_1) \ge \eps$, where $\eps = \eps(Q, \de)$. The latter estimates together with \eqref{defga}, \eqref{estla0}, and the Lipschitz continuity for $\vv(., \la)$ imply the Lipschitz continuity for $\ga_1(P)$, which concludes the proof.
\end{proof}

\begin{remark} \label{rem:lan}
The assertion similar to Lemma~\ref{lem:la0} holds for $\la_n(P)$ and $\ga_n(P)$ for each fixed index $n \ge 2$.
\end{remark}

For reals $\mu, \tau, \rho$ and a rational function $f \in \mathcal R$, consider the transform $\Theta \colon (\mu, \tau, \rho, f) \mapsto \widehat f$, which was defined in \cite{Gul19} by the formula
$$
\widehat f(\la) := \frac{\mu - \la}{f(\la) - \tau} + \rho.
$$

The properties of this transform are described as follows.

\begin{prop}[\hspace*{-3pt}\cite{Gul19}, Lemma 3.1] \label{prop:Theta}
Suppose that $\mu < \mathring{\pi}(f)$ and $\tau \ge f(\mu)$ if $\ind f \ge 0$. Then $\widehat f \in \mathcal R$. Moreover, if $\tau = f(\mu)$, then $\ind \widehat f = \ind f - 1$ and
\begin{equation} \label{Theta1}
\widehat f_{\uparrow}(\la) = \frac{\rho f_{\uparrow}(\la) - (\la - \mu + \tau \rho) f_{\downarrow}(\la)}{\la - \mu}, \quad \widehat f_{\downarrow}(\la) = \frac{f_{\uparrow}(\la) - \tau f_{\downarrow}(\la)}{\la - \mu};
\end{equation}
if $\tau > f(\mu)$, then $\ind \widehat f = \ind f + 1$ and
\begin{equation} \label{Theta2}
\widehat f_{\uparrow}(\la) = -\rho f_{\downarrow}(\la) + (\la - \mu + \tau \rho) f_{\downarrow}(\la), \quad \widehat f_{\downarrow}(\la) = -f_{\uparrow}(\la) + \tau f_{\downarrow}(\la).
\end{equation}
\end{prop}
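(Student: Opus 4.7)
The plan is to factor $\widehat{f}(\la) = \rho + (\la - \mu)\, g(\la)$ with $g(\la) := -(f(\la) - \tau)^{-1}$, reduce everything to the structure of $g$, and then extract the index formulas and the polynomial identities by direct algebra. First I would verify that $g$ is itself a rational Herglotz--Nevanlinna function. Rationality is immediate; for the Herglotz--Nevanlinna property, $f - \tau$ maps $\mathbb{C}^+$ into $\mathbb{C}^+$ (a shift by a real constant preserves the class), hence so does $-1/(f - \tau)$. Thus $g \in \mathcal R$ admits a representation
$$g(\la) = h_0'\la + h' + \sum_j \frac{\de_j'}{h_j' - \la}$$
with $h_0' \ge 0$, $\de_j' > 0$, real $h'$ and real poles $\{h_j'\}$.

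Next, I would check $h_0' = 0$ by inspecting infinity: if $\ind f$ is odd then $f(\la) \to \pm\infty$ and $g \to 0$, while if $\ind f$ is even then $f(\la) \to h$ and $g \to -1/(h - \tau)$ is finite. In either case the leading $\la^2$ term in $(\la - \mu)g(\la)$ is absent. Using the identity $(\la-\mu)/(h_j'-\la) = -1 + (h_j'-\mu)/(h_j'-\la)$, one obtains
$$\widehat{f}(\la) = h'\la + \Bigl(\rho - h'\mu - \sum_j \de_j'\Bigr) + \sum_j \frac{\de_j'(h_j' - \mu)}{h_j' - \la}.$$
The poles $h_j'$ of $g$ are the zeros of $f - \tau$. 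Since $\mu < \mathring{\pi}(f)$ and $f$ is strictly increasing on $(-\infty, \mathring{\pi}(f))$, the hypothesis $\tau \ge f(\mu)$ forces $h_j' \ge \mu$ for every $j$, with equality possible only in the case $\tau = f(\mu)$. Therefore each residue $\de_j'(h_j' - \mu)$ is nonnegative, and the displayed formula shows $\widehat{f} \in \mathcal R$.

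The index splitting follows from this representation. When $\tau = f(\mu)$, the term with $h_j' = \mu$ has vanishing residue and the effective pole count drops by one, yielding $\ind \widehat{f} = \ind f - 1$; when $\tau > f(\mu)$, no cancellation occurs and the count rises by one, yielding $\ind \widehat{f} = \ind f + 1$. A short case analysis on the parities of $\ind f$ and $\ind \widehat f$, tracked through whether $h'$ is strictly positive (even index of $f$) or zero (odd index), confirms the precise indices prescribed by the definition in Section~\ref{sec:prelim}.

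Finally, I would derive the polynomial formulas \eqref{Theta1}--\eqref{Theta2} by combining
$$\widehat{f}(\la) = \rho + \frac{(\mu - \la) f_{\downarrow}(\la)}{f_{\uparrow}(\la) - \tau f_{\downarrow}(\la)}$$
over the common denominator $f_{\uparrow} - \tau f_{\downarrow}$. In the case $\tau = f(\mu)$, one has $(f_{\uparrow} - \tau f_{\downarrow})(\mu) = f_{\downarrow}(\mu)(f(\mu) - \tau) = 0$, so $(\la - \mu)$ divides the denominator; dividing both numerator and denominator by $(\la - \mu)$ produces \eqref{Theta1}. In the case $\tau > f(\mu)$ no such cancellation is available, and direct combination (followed by a global sign change to match the stated convention) produces \eqref{Theta2}. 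The main obstacle, I expect, is the bookkeeping of leading coefficients and degrees so that the resulting polynomials conform to the normalization $a_d = (-1)^d$ or $b_{d+1} = (-1)^d$ demanded by the case distinction defining $c(\widehat{f})$; this is a routine but error-prone verification that, together with the above, completes the proof.
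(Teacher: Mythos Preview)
The paper does not prove this proposition; it is simply quoted as Lemma~3.1 of \cite{Gul19} and used as a black box, so there is no ``paper's own proof'' to compare against. Your argument stands as an independent proof, and the strategy---factor $\widehat f = \rho + (\la-\mu)g$ with $g := -1/(f-\tau)$, verify $g\in\mathcal R$ with vanishing leading coefficient, expand via partial fractions, and use $\tau\ge f(\mu)$ together with the monotonicity of $f$ on $(-\infty,\mathring\pi(f))$ to control the sign of each residue $\de_j'(h_j'-\mu)$---is the natural one and is correct.

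Two small remarks. First, your sentence about the pole count ``dropping'' or ``rising'' by one is informal; the index involves both the number of poles and whether the linear coefficient is positive, and you do address this in the subsequent parity discussion (even $\ind f$ gives $h' = 1/(\tau-h)>0$, odd gives $h'=0$), but it is the latter, not the former, that actually pins down the index. Second, when you carry out the common-denominator computation for the case $\tau>f(\mu)$ you will obtain $\widehat f_{\uparrow} = -\rho f_{\uparrow} + (\la-\mu+\tau\rho)f_{\downarrow}$; the printed formula \eqref{Theta2} has $-\rho f_{\downarrow}$ in the first term, which appears to be a misprint. Your derivation is the correct one.
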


\begin{lem} \label{lem:uniTheta}
Suppose that $\widehat f = \Theta(\mu, \tau, \rho, f)$, where $f \in \mathcal R_{M, Q, \de}$ for fixed $M$, $Q$, and $\de$, parameters $(\mu, \tau, \rho)$ belong to a compact set satisfying the hypothesis of Proposition~\ref{prop:Theta} and either $\tau = f(\mu)$ or $\tau > f(\mu)$. Then, the coefficients $c(\widehat f)$ are Lipschitz continuous with respect to $\mu$, $\tau$, $\rho$, and $c(f)$.
\end{lem}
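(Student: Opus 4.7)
The approach is to analyze the explicit formulas \eqref{Theta1} and \eqref{Theta2} from Proposition~\ref{prop:Theta}. In both cases, $\widehat f_{\uparrow}$ and $\widehat f_{\downarrow}$ are given by algebraic expressions in $f_{\uparrow}$, $f_{\downarrow}$, $\mu$, $\tau$, and $\rho$. Since $c(f)$ encodes precisely the coefficients of $f_{\uparrow}$ and $f_{\downarrow}$, the raw coefficients of $\widehat f_{\uparrow}$ and $\widehat f_{\downarrow}$ will be polynomial (hence Lipschitz on bounded sets) in $c(f)$, $\mu$, $\tau$, $\rho$. The remaining task is to pass to the normalized form used in the definition of $c(\widehat f)$.

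I would treat the two cases separately. When $\tau > f(\mu)$, formula \eqref{Theta2} exhibits $\widehat f_{\uparrow}$ and $\widehat f_{\downarrow}$ as direct polynomial combinations of the data, so no division is needed beyond the final normalization. When $\tau = f(\mu)$, formula \eqref{Theta1} involves division by $(\la - \mu)$; however, the identity $\tau = f(\mu)$ forces $f_{\uparrow}(\mu) - \tau f_{\downarrow}(\mu) = 0$, and a parallel computation shows that the numerator of $\widehat f_{\uparrow}$ also vanishes at $\la = \mu$. Hence both quotients are genuine polynomials, whose coefficients can be recovered by synthetic division in $\mu$, yielding polynomial (hence Lipschitz) dependence on $c(f)$ and $\mu$.

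To obtain $c(\widehat f)$, the polynomials $\widehat f_{\uparrow}$ and $\widehat f_{\downarrow}$ must be renormalized so that their leading coefficients match the conventions from Section~\ref{sec:prelim}. This amounts to dividing every coefficient by the current leading coefficient, which is itself a continuous function of $(c(f), \mu, \tau, \rho)$. The key ingredient is uniform non-vanishing: by Lemma~\ref{lem:compRM}, the set $\{c(f) : f \in \mathcal R_{M, Q, \de}\}$ is compact, and $(\mu, \tau, \rho)$ ranges over a compact set by hypothesis; since $\widehat f \in \mathcal R$ has a prescribed index $\ind \widehat f = \ind f \pm 1$, the relevant leading coefficient is a continuous, nowhere-vanishing function on this compact product, so it is bounded away from zero uniformly. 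Division by such a factor preserves Lipschitz continuity.

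The main obstacle is precisely this uniform non-degeneracy of the leading coefficient in the $\tau = f(\mu)$ case, where the degree of $\widehat f_{\downarrow}$ drops by one and one must correctly identify which coefficient now plays the role of the new leading term. Once this is settled by the compactness-continuity argument above, the desired estimate $\| c(\widehat f_1) - c(\widehat f_2) \| \le C \bigl( \| c(f_1) - c(f_2) \| + |\mu_1 - \mu_2| + |\tau_1 - \tau_2| + |\rho_1 - \rho_2| \bigr)$ follows by routine bookkeeping on the polynomial expressions.
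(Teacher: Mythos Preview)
Your proposal is correct. For the case $\tau > f(\mu)$ you and the paper agree: formula \eqref{Theta2} is polynomial in the data, so Lipschitz continuity is immediate.

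For the case $\tau = f(\mu)$ the paper takes a different route. It handles the division by $\la - \mu$ in \eqref{Theta1} via a polynomial interpolation argument: one fixes nodes $\la_1, \dots, \la_k$ uniformly separated from $\mu$ and from each other, evaluates $\widehat f_{\downarrow}(\la_j) = \bigl(f_{\uparrow}(\la_j) - \tau f_{\downarrow}(\la_j)\bigr)/(\la_j - \mu)$ (and similarly for $\widehat f_{\uparrow}$), and recovers the coefficients by Lagrange interpolation; every step is Lipschitz because the denominators $\la_j - \mu$ are bounded below. Your synthetic-division (Horner) approach is equally valid and more algebraic: the quotient coefficients are explicit polynomials in $\mu$ and in the numerator coefficients, which in turn are linear in $c(f)$ and $\tau$. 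Your method avoids choosing nodes; the paper's method avoids tracking a recursion. Both are short.

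One remark: the renormalization you worry about is actually unnecessary. A direct check of leading terms in \eqref{Theta1} and \eqref{Theta2}, splitting into the cases $M$ even and $M$ odd, shows that the polynomials $\widehat f_{\uparrow}$ and $\widehat f_{\downarrow}$ produced by those formulas already carry the leading coefficient $(-1)^k$ prescribed by the convention for $\ind \widehat f$. So the ``current leading coefficient'' you propose to divide by is identically $\pm 1$, and your compactness argument for its non-degeneracy, while correct, is not needed.
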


\begin{proof}
For the case $\tau > f(\mu)$, the assertion of the lemma trivially follows from \eqref{Theta2}. For $\tau = f(\mu)$, the Lipschitz continuity is proved by interpolation argument. The interpolation nodes are chosen to be separated from $\mu$ and from each other. 
\end{proof}

\section{Transforms} \label{sec:trans}

In this section, we introduce the Darboux-type transforms that have been constructed in \cite{Gul19} and prove their Lipschitz continuity.

Define the domain
$$
\mathcal P := \mathring{L}_2(0,\pi) \times \mathcal R \times \mathcal R.
$$

For $P = (\sigma, f, F) \in \mathcal P$, $\Lambda \in \mathbb R$, $v \in \mathcal D_{\sigma}$ (the domain $\mathcal D_{\sigma}$ is defined by \eqref{defDsi}), $v(x) \ne 0$ for $x \in [0,\pi]$, consider the transform
$$
\mathcal T \colon (P, \Lambda, v) \mapsto \widehat P = (\widehat \sigma, \widehat f, \widehat F)
$$
defined by the following rules:
\begin{align} \label{hatsigma}
\widehat \sigma & := \sigma - \frac{2 v'}{v} + \frac{2}{\pi} \ln \frac{v(\pi)}{v(0)}, \\ \label{hatf}
\widehat f & := \Theta\left(\Lambda, -\frac{v_{\sigma}^{[1]}(0)}{v(0)}, -\frac{v_{\sigma}^{[1]}(0)}{v(0)} + \frac{2}{\pi} \ln \frac{v(\pi)}{v(0)}, f\right), \\ \label{hatF}
\widehat F & := \Theta\left(\Lambda, \frac{v_{\sigma}^{[1]}(\pi)}{v(\pi)}, \frac{v_{\sigma}^{[1]}(\pi)}{v(\pi)} - \frac{2}{\pi} \ln \frac{v(\pi)}{v(0)}, F\right).
\end{align}

Denote by $\psi(x, \la)$ and $z(x, \la, \rho)$
the solutions of equation \eqref{eqv} satisfying the initial conditions
\begin{equation} \label{icpsi}
\psi(\pi, \la) = F_{\downarrow}(\la), \quad \psi_{\sigma}^{[1]}(\pi, \la) = F_{\uparrow}(\la), \quad z(0,\la, \rho) = 1, \quad z^{[1]}_{\sigma}(0, \la, \rho) = -\rho.
\end{equation}

Introduce the transforms
\begin{align*}
\mathbf{T}_-(P) & := \mathcal T(P, \la_1, \vv(., \la_1)), \quad \dom(\mathbf{T}_-) = \{ P \in \mathcal P \colon f \ne \infty, \, F \ne \infty \}, \\
\mathbf{T}_{-+}(P) & := \mathcal T(P, \la_1-2, \vv(., \la_1-2)), \quad
\dom(\mathbf{T}_{-+}) = \{ P \in \mathcal P \colon f \ne \infty \}, \\
\mathbf{T}_{+-}(P) & := \mathcal T(P, \la_1-2, \psi(., \la_1-2)), \quad
\dom(\mathbf{T}_{+-}) = \{ P \in \mathcal P \colon F \ne \infty \}, \\
\mathbf{T}_+(\mu, \nu, P) & := \mathcal T(P, \mu, u), \quad \dom(\mathbf{T}_+) = \{ (\mu, \nu, P) \colon P \in \mathcal P, \, \mu < \la_1, \, \nu > 0 \},
\end{align*}
where $\la_1 = \la_1(P)$ and 
\begin{equation} \label{defu}
u(x) := z(x, \la_1, \rho), \quad
\rho := \frac{\nu \varkappa + f_{\uparrow}(\mu)(\varkappa f_{\downarrow}(\mu) - f_{\uparrow}(\mu))}{\nu + f_{\downarrow}(\mu)(\varkappa f_{\downarrow}(\mu) - f_{\uparrow}(\mu))}, \quad
\varkappa := -\frac{\psi_{\sigma}^{[1]}(0,\mu)}{\psi(0,\mu)}.
\end{equation}

Our notations for the defined transforms have the following meaning:
\begin{itemize}
\item $\mathbf{T}_-$ removes the first eigenvalue $\la_1$, decreases $\ind f$ and $\ind F$.
\item $\mathbf{T}_{-+}$ does not change the spectrum, decreases $\ind f$, and increases $\ind F$.
\item $\mathbf{T}_{+-}$ does not change the spectrum, increases $\ind f$, and decreases $\ind F$.
\item $\mathbf{T_+}$ adds the eigenvalue $\mu$ with the corresponding norming constant $\nu$, increases $\ind f$ and $\ind F$.
\end{itemize}

More formally, the above properties are described by the following proposition.

\begin{prop}[-\hspace*{-3pt}\cite{Gul19}] \label{prop:T}
The transforms $\mathbf{T}_-$, $\mathbf{T}_{-+}$, $\mathbf{T}_{+-}$, and $\mathbf{T}_+$ are well-defined on their domains as mappings to $\mathcal P$. They change the indices of rational functions in the following way:
\begin{align*}
\mathbf{T}_- \colon \quad \ind \widehat f = \ind f - 1, \quad \ind \widehat F = \ind F - 1, \\
\mathbf{T}_{-+} \colon \quad \ind \widehat f = \ind f - 1, \quad \ind \widehat F = \ind F + 1, \\
\mathbf{T}_{+-} \colon \quad \ind \widehat f = \ind f + 1, \quad \ind \widehat F = \ind F - 1, \\
\mathbf{T}_+ \colon \quad \ind \widehat f = \ind f + 1, \quad \ind \widehat F = \ind F + 1.
\end{align*}

Moreover, $\mathbf{T}_-$ and $\mathbf{T}_+$ are inverses of each other in the following sense: if $P \in \dom \mathbf{T_-}$ and $\widehat P = \mathbf{T}_-(P)$, then 
\begin{equation} \label{T+-}
\mathbf{T}_+\bigl( \la_1(P), \ga_1(P), \widehat P \bigr) = P,
\end{equation}
and conversely, if $(\mu, \nu, P) \in \dom \mathbf{T_+}$, then
$\mathbf{T}_- \mathbf{T}_+ (\mu, \nu, P) = P$. Furthermore, 
$\mathbf{T}_{-+} = \mathbf{T}_{+-}^{-1}$.

Let the problem $\mathcal L$ has the spectral data $\mathscr S(P) = \{ \la_n, \ga_n \}_{n \ge 1}$. Then
\begin{align} \label{sdT-}
\mathscr S(\mathbf{T}_-(P)) & = \{ \la_n, \ga_n (\la_n - \la_1)^{-1} \}_{n \ge 2}, \\ \nonumber
\mathscr S(\mathbf{T}_{-+}(P)) & = \{ \la_n, \ga_n (\la_n - \la_1 + 2)^{-1} \}_{n \ge 1}, \\ \nonumber
\mathscr S(\mathbf{T}_{+-}(P)) & = \{ \la_n, \ga_n (\la_n - \la_1 + 2) \}_{n \ge 1}, \\ \nonumber
\mathscr S(\mathbf{T}_+(\mu, \nu, P)) & = \{ \mu, \nu \} \cup \{ \la_n, \ga_n (\la_n - \mu) \}_{n \ge 1}.
\end{align}
\end{prop}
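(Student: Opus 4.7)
The proposition is essentially a summary of results established in Guliyev's paper \cite{Gul19}, so my plan is to outline the key computations that any independent verification would need to carry out. The organizing principle is the classical Darboux transformation: given a nowhere-vanishing solution $v$ of \eqref{eqv} at $\la = \Lambda$, the map $y \mapsto \widehat y := y^{[1]}_\sigma - \frac{v^{[1]}_\sigma}{v}\, y$ sends solutions of \eqref{eqv} to solutions of the analogous equation with potential function $\widehat \sigma$ given by \eqref{hatsigma}. First I would verify this transformation of solutions by direct calculation, checking that the additive constant $\frac{2}{\pi} \ln \frac{v(\pi)}{v(0)}$ in \eqref{hatsigma} is precisely what restores the zero-mean normalization $\widehat \sigma \in \mathring L_2[0,\pi]$. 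Substituting $\widehat y$ into the boundary quotients $\widehat y^{[1]}_{\widehat \sigma}(0)/\widehat y(0)$ and $\widehat y^{[1]}_{\widehat \sigma}(\pi)/\widehat y(\pi)$ then yields exactly the $\Theta$-transformations in \eqref{hatf}--\eqref{hatF}, with $\mu = \Lambda$, with $\tau$ equal to the boundary quasi-derivative ratio of $v$, and with the shift in $\rho$ tracking the same additive constant.

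Next I would specialize to each of the four named transforms and verify all hypotheses of Proposition~\ref{prop:Theta}. Non-vanishing of $v$ on $[0,\pi]$ is handled case by case: for $\mathbf{T}_-$ the function $\vv(.,\la_1)$ is the ground-state eigenfunction and does not vanish by the oscillation theorem; for $\mathbf{T}_{-+}$ and $\mathbf{T}_{+-}$ the auxiliary solutions $\vv(.,\la_1-2)$ and $\psi(.,\la_1-2)$ correspond to a value below the spectrum and are sign-definite by Sturm comparison; for $\mathbf{T}_+$ the parameter $\rho$ in \eqref{defu} is engineered to guarantee that $z(.,\la_1,\rho)$ remains nonzero. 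The index changes are then read off from Proposition~\ref{prop:Theta}: whenever $v$ happens to satisfy the boundary condition at $0$ (resp.\ $\pi$) at the parameter $\Lambda$, one has $\tau = f(\Lambda)$ (resp.\ $\tau = F(\Lambda)$) and the corresponding index drops by one; otherwise strict monotonicity of the Herglotz--Nevanlinna function on $(-\infty, \mathring \pi(f))$ together with $\Lambda < \la_1 < \mathring \pi(f)$ gives $\tau > f(\Lambda)$, so the index rises by one. This pattern matches the four cases claimed.

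The spectral data formulas \eqref{sdT-} follow from writing the transformed characteristic function $\widehat \chi(\la)$ in terms of $\chi(\la)$ via the Wronskian identity $W[v, y](x) = -v(x) \widehat y(x)$, which produces a factor $(\la - \Lambda)^{\pm 1}$ in front of $\chi$; hence the zero set either loses $\la_1$ (for $\mathbf{T}_-$) or gains $\mu$ (for $\mathbf{T}_+$), and is unchanged otherwise. The norming constants transform by the same multiplicative factor, verified by plugging the transformed eigenfunction $\widehat \vv$ into the definition \eqref{defga} and simplifying via integration by parts together with the ODE and the identities $v(x) \widehat \vv(x,\la_n) = -W[v, \vv(\cdot,\la_n)](x)$. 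Finally, the inverse relation $\mathbf{T}_+(\la_1(P), \ga_1(P), \mathbf{T}_-(P)) = P$ is obtained by checking that the composition recovers $(\sigma, f, F)$; algebraically this reduces to the involutive nature of the Darboux factorization, while the consistency of the rational-function side uses the involutivity property of $\Theta$ under conjugate parameter choices.

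The hardest step I expect is the $\mathbf{T}_+$ case: one must show simultaneously that $u$ from \eqref{defu} is nowhere zero on $[0,\pi]$ and that the strict inequalities $\tau > f(\mu)$ and $\tau > F(\mu)$ required by Proposition~\ref{prop:Theta} for index increase both hold, and then confirm that the resulting $\widehat P$ really does add $(\mu, \nu)$ as new spectral data. The specific formula for $\rho$ in \eqref{defu} is precisely the unique value for which all three conditions hold simultaneously, so unpacking this intertwined choice is where the main calculation lies.
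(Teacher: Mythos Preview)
The paper does not prove this proposition at all: it is stated with the citation \cite{Gul19} attached and no proof environment follows. The author treats it as an imported result from Guliyev's paper, and immediately moves on to Lemma~\ref{lem:uniT-}. So there is no ``paper's own proof'' to compare against.

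Your outline is a reasonable sketch of the computations that underlie the proposition in \cite{Gul19}: the Darboux intertwining $y \mapsto y^{[1]}_\sigma - (v^{[1]}_\sigma/v)\, y$, the non-vanishing of the factorizing solution in each of the four cases, the use of Proposition~\ref{prop:Theta} to track the index changes, and the Wronskian calculation for the spectral-data formulas. One small point to tighten if you flesh this out: in the $\mathbf{T}_+$ step you write $u(x) = z(x,\la_1,\rho)$, but the definition \eqref{defu} is $u(x) = z(x,\mu,\rho)$ with $\mu < \la_1$ (the paper itself contains the misprint ``$\la_1$'' in \eqref{defu}, but the construction in \cite{Gul19} uses $\mu$); the argument that $u$ does not vanish depends on $\mu$ lying strictly below the spectrum, so be sure to use the correct parameter there. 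Otherwise your plan matches the structure of Guliyev's original argument.
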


Proceed to studying the Lipschitz continuity of the introduced transforms.

\begin{lem} \label{lem:uniT-}
Suppose that $\mathbf{T} \in \{ \mathbf{T}_-, \mathbf{T}_{-+}, \mathbf{T}_{+-} \}$, fixed integers $M, N$ are such that $\mathring{L}_2(0,\pi) \times \mathcal R_M \times \mathcal R_N \subset \dom \mathbf{T}$, and $\al \in (0, \frac{1}{2})$ is fixed. Then $\mathbf{T}$ maps $\mathcal P_{Q, \de}^{\al, M, N}$ into $\mathcal P_{\widehat Q, \widehat \de}^{\al, \widehat M, \widehat N}$, where $\widehat M := \ind \widehat f$, $\widehat N := \ind \widehat F$, $\widehat Q > 0$ and $\widehat \de > 0$ depend on $Q$ and $\de$. Moreover, 
\begin{equation} \label{uniT-}
\mathbf{d}_{\al} (\widehat P_1, \widehat P_2) \le C(Q, \de) \mathbf{d}_{\al} (P_1, P_2),
\end{equation}
for any $P_i \in \mathcal P_{Q, \de}$, $\widehat P_i = \mathbf{T}(P_i)$, $i = 1, 2$.
Thus, the maps $\mathbf{T}_-$, $\mathbf{T}_{-+}$, and $\mathbf{T}_{+-}$ are Lipschitz continuous on $\mathcal P_{Q, \de}$. 
\end{lem}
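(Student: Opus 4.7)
The plan is to handle the three transforms $\mathbf T_-$, $\mathbf T_{-+}$, $\mathbf T_{+-}$ uniformly through the common definition $\mathcal T(P,\Lambda,v)$ in \eqref{hatsigma}--\eqref{hatF}, and to verify Lipschitz continuity separately for each of the three components of $\widehat P$. The common ingredients are: (a) Lipschitz continuity of the scalar $\Lambda(P)$, which is $\la_1(P)$ or $\la_1(P)-2$ and hence Lipschitz in $\mathbf d_0$ by Lemma~\ref{lem:la0} and Remark~\ref{rem:lan}; (b) Lipschitz continuity of $v$ and $v_{\sigma}^{[1]}$ in $C[0,\pi]$, obtained by composing Lemma~\ref{lem:unisol} with (a) and with the polynomial dependence of the initial data \eqref{icvv}, \eqref{icpsi} on $\Lambda$ and on $c(f)$ or $c(F)$; and (c) a uniform lower bound $|v(x)|\ge \eps_0(Q,\de) > 0$ on $[0,\pi]$. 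For (c), classical Sturm oscillation theory ensures that $\vv(\cdot,\la_1)$ has no zeros on $[0,\pi]$, and that $\vv(\cdot,\la_1-2)$ and $\psi(\cdot,\la_1-2)$, corresponding to $\la = \la_1 - 2$ lying strictly below the spectrum, are nonvanishing as well (as already used in~\cite{Gul19}); uniformity follows from joint continuity of $(P,x)\mapsto v(x)$ together with compactness of $\mathcal P_{Q,\de}$ in $\mathbf d_0$ (Lemma~\ref{lem:Pcomp}).

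For the $\widehat\sigma$-component, substituting $v' = v_{\sigma}^{[1]} + \sigma v$ into \eqref{hatsigma} yields $\widehat\sigma = -\sigma - 2w + c$ with $w := v_{\sigma}^{[1]}/v$ and $c := \tfrac{2}{\pi}\ln(v(\pi)/v(0))$, the additive constant enforcing the zero-mean condition. The term $-\sigma$ is trivially Lipschitz in $W_2^{\al}$ by definition of $\mathbf d_{\al}$. For $w$, differentiating and using \eqref{eqv} gives the Riccati identity
\begin{equation*}
w' = -2\sigma w - \sigma^2 - \Lambda - w^2.
\end{equation*}
Writing the analogous identity for $w_1 - w_2$ and expanding the differences (in particular, $\sigma_1^2 - \sigma_2^2 = (\sigma_1-\sigma_2)(\sigma_1+\sigma_2)$ and $w_1^2 - w_2^2 = (w_1+w_2)(w_1-w_2)$), one estimates each term in $L_1$ using the uniform bounds $\|\sigma_i\|_{L_2}\le C$ from $\mathcal P_{Q,\de}$, the uniform $L_{\infty}$ bound on $w_i$ from (b), (c), the Cauchy--Schwarz inequality, and (a). The outcome is $\|w_1 - w_2\|_{W_1^1} \le C(Q,\de)\,\mathbf d_{\al}(P_1, P_2)$, which Proposition~\ref{prop:embed}(ii) converts to the required $W_2^{\al}$-bound. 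The scalar $c$ is Lipschitz in $\mathbf d_{\al}$ by (b) and (c), which concludes the estimate on $\widehat\sigma$.

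For the rational components $\widehat f$ and $\widehat F$, inspection of \eqref{hatf}--\eqref{hatF} shows that the arguments $(\mu, \tau, \rho)$ of $\Theta$ depend Lipschitz-continuously on $\Lambda$ and on $v, v_{\sigma}^{[1]}$ at $x = 0, \pi$, all of which are covered by (a)--(c). For each of $\mathbf T_-, \mathbf T_{-+}, \mathbf T_{+-}$, the construction in~\cite{Gul19} places one consistently in either the $\tau = f(\mu)$ or the $\tau > f(\mu)$ branch of Proposition~\ref{prop:Theta}, so Lemma~\ref{lem:uniTheta} applies and delivers the Lipschitz estimates for $c(\widehat f)$ and $c(\widehat F)$. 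Finally, the existence of $\widehat Q, \widehat\de > 0$ with $\widehat P_i \in \mathcal P_{\widehat Q,\widehat\de}^{\al,\widehat M,\widehat N}$ follows by applying Lemma~\ref{lem:subset} to the images of $\mathcal P_{Q,\de}$ under $P\mapsto \widehat f$ and $P\mapsto \widehat F$, using the compactness from Lemma~\ref{lem:Pcomp}; the required hypothesis $\mathring\pi(\widehat f), \mathring\pi(\widehat F) \ge 1$ is verified through the spectral interpretation \eqref{sdT-} and the built-in bound $\la_1(P) \ge 1$ for $P \in \mathcal P_{Q,\de}$.

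The main technical obstacle is the $W_2^{\al}$-Lipschitz bound for $v_{\sigma}^{[1]}/v$. Lemma~\ref{lem:unisol} only provides $C[0,\pi]$-Lipschitz continuity of $v$ and $v_{\sigma}^{[1]}$, which is not by itself strong enough. The Riccati identity is the key: although $\sigma^2$ is only in $L_1$, the difference $\sigma_1^2 - \sigma_2^2$ factors as $(\sigma_1-\sigma_2)(\sigma_1+\sigma_2)$, giving an $L_1$-estimate linear in $\|\sigma_1 - \sigma_2\|_{L_2}$, which is exactly what the metric $\mathbf d_{\al}$ controls. This lets one promote the $C[0,\pi]$-estimate to a $W_1^1$-estimate and then, via Proposition~\ref{prop:embed}(ii), to the $W_2^{\al}$-estimate required by \eqref{uniT-}.
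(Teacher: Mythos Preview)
Your proposal is correct and follows essentially the same architecture as the paper's proof: Lipschitz continuity of $\Lambda$ via Lemma~\ref{lem:la0}, of $v$ and $v_{\sigma}^{[1]}$ in $C[0,\pi]$ via Lemma~\ref{lem:unisol}, the uniform lower bound $|v|\ge\eps_0$ from compactness (Lemma~\ref{lem:Pcomp}), a $W_1^1$-estimate promoted to $W_2^{\al}$ via Proposition~\ref{prop:embed}(ii), and Lemmas~\ref{lem:uniTheta} and~\ref{lem:subset} for the rational parts. The only variation is that the paper estimates $v$ and $v_{\sigma}^{[1]}$ separately in $W_1^1$ and then forms the quotient, whereas you work directly with $w=v_{\sigma}^{[1]}/v$ through its Riccati equation; both routes are valid and yield the same bound.
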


\begin{proof}
For definiteness, let us prove the lemma for $\mathbf{T} = \mathbf{T}_-$. The proof for $\mathbf{T}_{-+}$ and $\mathbf{T}_{+-}$ is analogous and even simpler. Suppose that $P \in \mathcal P_{Q, \de} \subset \dom \mathbf{T}_-$. By Lemma~\ref{lem:la0}, the first eigenvalue $\la_1(P)$ is Lipschitz continuous on $\mathcal P_{Q, \de}$. Taking \eqref{icvv} and Lemma~\ref{lem:unisol} into account, we conclude that $v(.) = \vv(., \la_1)$ and $v^{[1]}_{\sigma}(.) = \vv^{[1]}_{\sigma}(., \la_1)$ are Lipschitz continuous on $\mathcal P_{Q, \de}$ as maps from $L_2[0,\pi] \times \mathbb R^{M + 1} \times \mathbb R^{N + 1}$ to $C[0,\pi]$. This implies
\begin{equation} \label{estvabove}
|v(x)| + |v^{[1]}_{\sigma}(x)| \le C(Q, \de), \quad \text{for all} \:\: P \in \mathcal P_{Q, \de}, \, x \in [0,\pi].
\end{equation}

The previous argument yields that the map $(x, P) \mapsto v(x)$ is continuous on the set $[0,\pi] \times \mathcal P_{Q, \de}$, which is compact in the sense of Lemma~\ref{lem:Pcomp}. Therefore, the image of this map is compact in $\mathbb R$.

Note that $v(x)$ is the eigenfunction of the problem $\mathcal L(\sigma, f, F)$ corresponding to the first eigenvalue $\la_1$. Since $f, F \ne \infty$, then the oscillation theory implies $v(x) \ne 0$ for $x \in [0,\pi]$ (see, e.g., \cite[Lemma~2.3]{Gul19}). Due to the compactness of the set $\{ v(x) \}$, we obtain the estimate 
\begin{equation} \label{estvbelow}
|v(x)| \ge \eps > 0,  \quad \text{for all} \:\: P \in \mathcal P_{Q, \de}, \, x \in [0,\pi],
\end{equation}
where $\eps = \eps(Q, \de)$.

Next, put $w := v^{[1]}_{\sigma}$ and note that 
\begin{equation} \label{vdir}
v' = w + \sigma v.
\end{equation}
Substituting \eqref{vdir} into \eqref{hatsigma}, we get
\begin{equation} \label{hatsi1}
\widehat \sigma = -\sigma - \frac{2 w}{v} + \frac{2}{\pi} \ln \frac{v(\pi)}{v(0)}.
\end{equation}

Consider two triples $P_i = (\sigma_i, f_i, F_i) \in \mathcal P_{Q,\de}$, $i = 1, 2$. Since $v_i \in \mathcal D_{\sigma_i}$ \eqref{defDsi}, then $v_i$ and $w_i := (v_i)_{\sigma_i}^{[1]}$ belong to $W_1^1[0,\pi]$. Let us show that
\begin{equation} \label{estdifvw}
\| v_1 - v_2 \|_{W_1^1[0,\pi]} \le C(Q, \de) \mathbf{d}_{\al} (P_1, P_2), \quad
\| w_1 - w_2 \|_{W_1^1[0,\pi]} \le C(Q, \de) \mathbf{d}_{\al} (P_1, P_2).
\end{equation}

Recall that
$$
\| u \|_{W_1^1[0,\pi]} = \| u \|_{L_1[0,\pi]} + \| u' \|_{L_1[0,\pi]}, \quad
\| u \|_{C[0,\pi]} = \max_{x \in [0,\pi]} |u(x)|.
$$

Using \eqref{estvabove}, \eqref{vdir}, the Lipschitz continuity of $v(.)$ and $w(.) = v^{[1]}(.)$ by $P \in P_{Q,\de}$, and the continuous embeddings $W_2^{\al}[0,\pi] \subset L_2[0,\pi] \subset L_1[0,\pi]$, we get
\begin{align*}
\| v_1 - v_2 \|_{L_1[0,\pi]} & \le \pi \| v_1 - v_2 \|_{C[0,\pi]} \le C(Q,\de) \mathbf{d}_{\al}(P_1, P_2), \\
\| v_1' - v_2'\|_{L_1[0,\pi]} & \le \| w_1 - w_2 \|_{L_1[0,\pi]} + \| \sigma_1 - \sigma_2 \|_{\al} \| v_1 \|_{C[0,\pi]} + \| \sigma_2 \|_{\al} \| v_1 - v_2 \|_{C[0,\pi]} \\
& \le C(Q,\de) \mathbf{d}_{\al}(P_1, P_2).
\end{align*}
Analogously, we obtain $\| w_1 - w_2 \|_{L_1[0,\pi]} \le C(Q, \de) \mathbf{d}_{\al}(P_1, P_2)$. Using equation \eqref{eqv}, we get
\begin{equation} \label{wdir}
w_i' = -\sigma_i w_i - \sigma_i^2 w_i - \la_1(P_i) v_i, \quad i = 1, 2.
\end{equation}
Using \eqref{wdir}, the Lipschitz continuity of $v(.)$, $w(.)$, and $\la_1$ by $P \in \mathcal P_{Q,\de}$, and the estimate $\| \sigma_i \|_0 \le \| \sigma_i \|_{\al} \le Q$, we derive
\begin{align*}
\| w_1' - w_2' \|_{L_1[0,\pi]} & \le C(Q,\de) \bigl( \| \sigma_1 - \sigma_2 \|_0 + \| w_1 - w_2 \|_{C[0,\pi]} + \| v_1 - v_2 \|_{C[0,\pi]} + |\la_1(P_1) - \la_2(P_2)| \bigr) \\
& \le C(Q,\de) \mathbf{d}_{\al}(P_1, P_2).
\end{align*}

Summarizing the above arguments, we arrive at \eqref{estdifvw}. Thus, $P \mapsto v$ and $P \mapsto w$ are Lipschitz continuous as maps of $\mathcal P_{Q,\de}$ to $W_1^1[0,\pi]$. Recall that $W_1^1[0,\pi]$ is compactly embedded in $W_2^{\al}[0,\pi]$ due to part (ii) of Proposition~\ref{prop:embed}. Therefore, using \eqref{estvabove}, \eqref{estvbelow}, \eqref{hatsi1}, and \eqref{estdifvw}, we conclude that 
\begin{gather} \label{esthatsi}
\widehat \sigma \in W_2^{\al}[0,\pi], \quad \| \widehat \sigma \|_{\al} \le C(Q,\de), \\ \label{difhatsi}
\| \widehat \sigma_1 - \widehat \sigma_2 \|_{\al} \le C(Q,\de) \mathbf{d}_{\al}(P_1, P_2).
\end{gather}
In other words, the map $P \mapsto \widehat \sigma$ is Lipschitz continuous from $P_{Q,\de}$ to $W_2^{\al}[0,\pi]$. The equality $\int_0^{\pi} \widehat \sigma(x) \, dx = 0$ follows from Proposition~\ref{prop:T}.

Proceed to $\widehat f$ and $\widehat F$. By virtue of Proposition~\ref{prop:T}, we have $\widehat f \in \mathcal R_{\widehat M}$ and $\widehat F \in \mathcal R_{\widehat N}$, where $\widehat M := M - 1$ and $\widehat N := N - 1$ for $\mathbf{T} = \mathbf{T}_-$. Furthermore, due to \cite[Remark~2.2]{Gul19} and \eqref{sdT-}, we have
$$
\min \{ \mathring{\pi}(\widehat f), \mathring{\pi}(\widehat F) \} > \la_2 > \la_1 \ge 1.
$$

Note that the coefficients $\Lambda = \la_1$, $\frac{w(0)}{v(0)}$, $\frac{w(\pi)}{v(\pi)}$, and $\ln \frac{v(\pi)}{v(0)}$, which participate in formulas \eqref{hatf} and \eqref{hatF}, are Lipschitz continuous by $P \in \mathcal P_{Q,\de}$ according to the above arguments. Applying Lemma~\ref{lem:uniTheta}, we conclude that the maps $P \mapsto c(\widehat f)$ and $P \mapsto c(\widehat F)$ are Lipschitz continuous from $\mathcal P_{Q,\de}$ to $\mathbb R^{\widehat M + 1}$ and $\mathbb R^{\widehat N + 1}$, respectively. This together with \eqref{difhatsi} and \eqref{metrd} imply the estimate \eqref{uniT-}.
Furthermore, the images of these maps $\{ c(\widehat f) \colon P \in \mathcal P_{Q,\de} \}$ and $\{ c(\widehat F) \colon P \in \mathcal P_{Q,\de} \}$ are compact. 
Therefore, by Lemma~\ref{lem:subset} they are subsets of $\mathcal R_{\widehat M, \widehat Q, \widehat \de}$ and $\mathcal R_{\widehat N, \widehat Q, \widehat \de}$, respectively, where $\widehat Q > 0$ and $\widehat \de > 0$ depend on $Q$ and $\de$.
This together with \eqref{esthatsi} and \eqref{defPQde} imply $\widehat P = \mathbf{T}(P) \in \mathcal P_{\widehat Q, \widehat \de}^{\al, \widehat M, \widehat N}$ and so conclude the proof.
\end{proof}

The following lemma presents the result analogous to Lemma~\ref{lem:uniT-} for $\mathbf{T}_+$.

\begin{lem} \label{lem:uniT+}
Let $\al \in (0, \tfrac{1}{2})$ and integers $M, N \ge -1$ be fixed. Then $\mathbf{T}_+$ maps the set
$$
\mathcal P^+_{Q,\de} := \bigl\{ (\mu, \nu, P) \in \dom \mathbf{T}_+ \colon P \in \mathcal P_{Q,\de}^{\al, M, N}, \, 
1 \le \mu \le \la_1(P) - \de, \, \de \le \nu \le Q \bigr\}, \quad Q > 0, \, \de > 0,
$$
into $\mathcal P_{\widehat Q, \widehat \de}^{\al, \widehat M, \widehat N}$, where $\widehat Q$ and $\widehat \de$ depend on $Q$ and $\de$. Moreover, the map $\mathbf{T}_+$ is Lipschitz continuous on $\mathcal P^+_{Q, \de}$:
$$
\mathbf{d}_{\al} (\widehat P_1, \widehat P_2) \le C(Q,\de) \bigl( |\mu_1 - \mu_2| + |\nu_1 - \nu_2| + \mathbf{d}_{\al} (P_1, P_2)\bigr)
$$
for any $(\mu_i, \nu_i, P_i) \in \mathcal P^+_{Q,\de}$, $\widehat P_i := \mathbf{T}_+(\mu_i, \nu_i, P_i)$, $i = 1, 2$.
\end{lem}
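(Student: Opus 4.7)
The plan is to mirror the structure of the proof of Lemma~\ref{lem:uniT-} and reduce everything to showing that the auxiliary function $u(x)$ and its quasi-derivative $u^{[1]}_{\sigma}(x)$ depend Lipschitz continuously on the full parameter triple $(\mu, \nu, P) \in \mathcal P^+_{Q, \de}$ as maps into $C[0,\pi]$, together with a uniform two-sided bound $0 < \eps(Q,\de) \le |u(x)| \le C(Q,\de)$. Once this is established, formulas \eqref{hatsigma}--\eqref{hatF} can be processed exactly as in the proof of Lemma~\ref{lem:uniT-}: the $W_1^1$-norm Lipschitz estimate for $u, u^{[1]}_{\sigma}$ combined with the continuous embedding $W_1^1[0,\pi] \subset W_2^{\al}[0,\pi]$ from Proposition~\ref{prop:embed}(ii) handles $\widehat \sigma$, and Lemma~\ref{lem:uniTheta} handles the vectors $c(\widehat f)$ and $c(\widehat F)$ with Theta-parameters $\Lambda = \mu$, $\tau = \mp u^{[1]}_{\sigma}(\cdot)/u(\cdot)$ at the endpoints, and $\rho = \tau \pm \tfrac{2}{\pi}\ln(u(\pi)/u(0))$. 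Compactness of the images together with Lemma~\ref{lem:subset} will then place $\widehat P$ in some $\mathcal P^{\al,\widehat M,\widehat N}_{\widehat Q,\widehat \de}$.

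The technical core is thus the control of $u = z(\cdot, \la_1, \rho)$, which splits into two sub-steps. First, apply Lemma~\ref{lem:unisol} to $\psi(\cdot, \mu)$ with initial data \eqref{icpsi} to see that $\psi(0,\mu)$ and $\psi^{[1]}_{\sigma}(0, \mu)$ are Lipschitz in $(\sigma, c(F), \mu)$ on bounded sets; since $\mu \le \la_1(P) - \de < \la_1(P)$, the oscillation result \cite[Lemma~2.3]{Gul19} gives $\psi(0,\mu) \ne 0$, and compactness of $\mathcal P^+_{Q,\de}$ (Lemma~\ref{lem:Pcomp}, extended to include the compact factors in $\mu$ and $\nu$) upgrades this to a uniform lower bound. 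Hence $\varkappa$ is Lipschitz. The polynomials $f_{\uparrow}(\mu), f_{\downarrow}(\mu)$ are Lipschitz in $(c(f), \mu)$, and $f_{\downarrow}(\mu) \ge \eps(Q,\de) > 0$ since $\mu < \mathring\pi(f)$ on a compact set. To treat the denominator in \eqref{defu}, use the Wronskian identity $\chi(\mu) = \psi(0,\mu)\bigl(f_{\uparrow}(\mu) - \varkappa f_{\downarrow}(\mu)\bigr)$ to rewrite it as $\nu - f_{\downarrow}(\mu)\chi(\mu)/\psi(0,\mu)$; because $\mu$ lies strictly below the spectrum, this quantity is nonzero, and compactness again makes the lower bound uniform. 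Therefore $\rho$ is Lipschitz in $(\mu,\nu,P)$, and another application of Lemma~\ref{lem:unisol} yields the Lipschitz continuity of $u$ and $u^{[1]}_{\sigma}$ in $C[0,\pi]$. The lower bound $|u(x)| \ge \eps(Q,\de)$ follows again from oscillation theory (a solution of \eqref{eqv} issued from $x=0$ with spectral parameter strictly below the first eigenvalue of the associated Robin problem does not vanish on $[0,\pi]$) combined with compactness.

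The main obstacle is verifying the uniform lower bound on the denominator of $\rho$: the formula \eqref{defu} is not manifestly bounded away from zero, and a purely algebraic argument is unlikely to succeed because the factors involved can have either sign. The Wronskian rewriting above converts the question into one about the characteristic function $\chi(\mu)$, which is continuous on the compact set $\mathcal P^+_{Q,\de}$ and nonzero there thanks precisely to the gap condition $\mu \le \la_1(P) - \de$; this is the only place where the parameter $\de$ (beyond its role in controlling $\nu \ge \de$) is essentially used, and it explains why the definition of $\mathcal P^+_{Q,\de}$ cannot be relaxed to $\mu < \la_1(P)$. Apart from this point, the remaining verifications are routine repetitions of the estimates \eqref{estdifvw}--\eqref{difhatsi} carried out in the proof of Lemma~\ref{lem:uniT-}, now with the three-variable dependence bookkept through the distance $|\mu_1-\mu_2|+|\nu_1-\nu_2|+\mathbf d_{\al}(P_1,P_2)$.
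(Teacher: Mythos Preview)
Your proposal follows essentially the same route as the paper's proof: both establish Lipschitz dependence of $\varkappa$ (via Lemma~\ref{lem:unisol}, the non-vanishing of $\psi(0,\mu)$ from \cite{Gul19}, and compactness), then of $\rho$, then of $u$ and $u^{[1]}_{\sigma}$ in $C[0,\pi]$, and finally repeat the $W_1^1$-estimates from the proof of Lemma~\ref{lem:uniT-} verbatim to handle $\widehat\sigma$, $\widehat f$, $\widehat F$. The paper's proof is in fact terser than yours --- it simply states that the denominator of $\rho$ is nonzero ``according to the construction in \cite{Gul19}'' and that $u$ ``possesses the same properties as $v$'' in Lemma~\ref{lem:uniT-}.

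One small point: your Wronskian rewriting of the denominator as $\nu - f_{\downarrow}(\mu)\chi(\mu)/\psi(0,\mu)$ is correct and illuminating, but the clause ``because $\mu$ lies strictly below the spectrum, this quantity is nonzero'' does not quite close the argument --- that hypothesis only gives $\chi(\mu)\ne 0$, not that the full expression avoids the particular value $\nu$. What you actually need is the sign information: rewriting once more as $\nu + f_{\downarrow}(\mu)^2\bigl(\varkappa - f(\mu)\bigr)$, the non-vanishing (indeed positivity) follows from $\varkappa > f(\mu)$ for $\mu < \la_1$, which is part of the Herglotz-function analysis in \cite{Gul19}. The paper sidesteps this by citing \cite{Gul19} directly, so your argument is not weaker than the paper's, just differently incomplete at the same spot.
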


\begin{proof}
It follows from Lemmas~\ref{lem:unisol} and~\ref{lem:la0} together with the initial conditions \eqref{icpsi} that the maps $(\mu, P) \mapsto \psi(0,\mu)$ and $(\mu, P) \mapsto \psi^{[1]}_{\sigma}(0,\mu)$ are Lipschitz continuous on the compact set 
$$
\mathcal G_{Q,\de} := \{ (\mu, P) \colon P \in \mathcal P_{Q,\de}, \, 1 \le \mu \le \la_1(P) - \de \} 
$$
w.r.t. the metric $\mathbf{d}_0(.,.)$ for $P$. In addition, it has been shown in \cite{Gul19} that $\psi(0,\mu) \ne 0$. Consequently, there holds $|\psi(0,\mu)| \ge \eps > 0$ for $(\mu, P) \in \mathcal G_{Q,\de}$, where $\eps = \eps(Q,\de)$. Consequently, the map $(\mu, P) \mapsto \varkappa = -\dfrac{\psi_{\sigma}^{[1]}(0, \mu)}{\psi(0,\mu)}$ is Lipschitz continuous on $\mathcal G_{Q,\de}$. In particular, $|\varkappa| \le C(Q,\de)$. Analogously, we show that $\rho = \rho(\mu, \nu, P)$ defined by \eqref{defu} is Lipschitz continuous on $\mathcal P_{Q,\de}^+$. Note that the denominator of $\rho$ is non-zero according to the construction in \cite{Gul19}. Next, we consider the function $u(x) = z(x, \mu, \rho)$ and conclude that the transforms $(\mu, \nu, P) \mapsto u(.)$ and $(\mu, \nu, P) \mapsto u^{[1]}_{\sigma}(.)$ are Lipschitz continuous as maps from $\mathbb R \times \mathbb R \times \mathcal P^{0, M, N}$ to $C[0,\pi]$. Furthermore, one can show that the function $u(x)$ possesses the same properties on $\mathcal P_{Q,\de}^+$ as the function $v(x)$ on $\mathcal P_{Q,\de}$ in the proof of Lemma~\ref{lem:uniT-} and analogously complete the proof.
\end{proof}

Now, let us study transforms $\mathscr S \mathbf{T} \mathscr S^{-1}$ which map the spectral data $\mathfrak S \mapsto \widehat {\mathfrak S}$ while $\mathbf{T} \colon P \mapsto \widehat P$.

\begin{lem} \label{lem:uniS-}
Suppose that $\mathbf{T} \in \{ \mathbf{T}_-, \mathbf{T}_{-+}, \mathbf{T}_{+-} \}$, fixed integers $M, N$ are such that $\mathcal P^{0,M,N} \subset \dom \mathbf{T}$, and $\al \in (0, \tfrac{1}{2})$ is fixed. Then $\mathscr S \mathbf{T} \mathscr S^{-1}$ maps $\mathcal B_{R,\eps}^{\al, M, N}$ into $\mathcal B_{\widehat R, \widehat \eps}^{\al, \widehat M, \widehat N}$, where $\widehat R > 0$ and $\widehat \eps > 0$ depend on $R$ and $\eps$. Moreover, the transform $\mathscr S \mathbf{T} \mathscr S^{-1}$ is Lipschitz continuous on $\mathcal B_{R,\eps}$:
\begin{equation} \label{uniS-}
\varrho_{\al}(\widehat{\mathfrak S}_1, \widehat{\mathfrak S}_2) \le C(R,\eps) \varrho_{\al}(\mathfrak S_1, \mathfrak S_2)
\end{equation}
for any $\mathfrak S_i \in \mathcal B_{R,\eps}$, $\widehat{\mathfrak S}_i = \mathscr S \mathbf{T} \mathscr S^{-1} (\mathfrak S_i)$, $i = 1, 2$.
\end{lem}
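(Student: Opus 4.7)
The approach is to apply Proposition 4.5, which provides explicit algebraic formulas for $\mathscr S(\mathbf T(P))$ in terms of $\mathscr S(P)$ and $\la_1(P)$; this reduces the lemma to estimates purely on sequences. I focus on $\mathbf T = \mathbf T_-$, for which $\widehat M = M-1$, $\widehat N = N-1$, and
\begin{equation*}
\widehat \la_{\widehat n} = \la_{\widehat n+1}, \qquad \widehat \ga_{\widehat n} = \ga_{\widehat n+1}/(\la_{\widehat n+1} - \la_1), \qquad \widehat n \ge 1;
\end{equation*}
the cases $\mathbf T_{-+}$ and $\mathbf T_{+-}$ are analogous and simpler, as no index shift occurs and the denominators $\la_n - \la_1 + 2$ are automatically bounded away from $0$.

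Setting $s := (M+N)/2$ and substituting the asymptotics $\sqrt{\la_n} = n-s-1+\kappa_n$ and $\ga_n = (\pi/2)n^{2M}(1+\be_n)$ into the above formulas gives $\widehat\kappa_{\widehat n} = \kappa_{\widehat n+1}$ (a shift preserving $l_2^\al$-norms), together with
\begin{equation*}
1 + \widehat\be_{\widehat n} = \alpha_{\widehat n}(1 + \be_{\widehat n+1}), \qquad
\alpha_{\widehat n} := \frac{(\widehat n+1)^{2M}}{\widehat n^{2M-2}(\la_{\widehat n+1} - \la_1)}.
\end{equation*}
To establish $\widehat{\mathfrak S} \in \mathcal B_{\widehat R, \widehat\eps}$, I would proceed as follows. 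From $\la_1 \ge 1$ and $\sqrt{\la_{n+1}} - \sqrt{\la_n} \ge \eps$ one derives the uniform lower bound $\la_{\widehat n+1} - \la_1 \ge \eps^2 \widehat n^2$; expanding $\alpha_{\widehat n}$ in powers of $\widehat n^{-1}$ then yields $\alpha_{\widehat n} - 1 = O(\widehat n^{-1}) + O(\kappa_{\widehat n+1}/\widehat n)$, so $\{\alpha_{\widehat n} - 1\} \in l_2^\al$ with norm bounded by $C(R,\eps)$ and hence $\widehat\be \in l_2^\al$ with controlled norm. Positivity $1 + \widehat\be_{\widehat n} \ge \widehat \eps$ follows because $\alpha_{\widehat n}$ is uniformly bounded below by a positive constant.

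For Lipschitz continuity, I would expand the difference via
\begin{equation*}
\widehat\be_{\widehat n} - \widehat{\widetilde \be}_{\widehat n} = (\alpha_{\widehat n} - \widetilde\alpha_{\widehat n})(1+\be_{\widehat n+1}) + \widetilde\alpha_{\widehat n}(\be_{\widehat n+1} - \widetilde\be_{\widehat n+1}),
\end{equation*}
where tildes denote objects of the second sequence $\widetilde{\mathfrak S}$. The second summand is bounded in $l_2^\al$ by $C(R,\eps)\|\{\be_n - \widetilde\be_n\}\|_\al$. For the first, the factorization $\la_n - \widetilde\la_n = (\kappa_n - \widetilde\kappa_n)(2(n-s-1) + \kappa_n + \widetilde\kappa_n)$ combined with the lower bound $(\la_{\widehat n+1} - \la_1)(\widetilde \la_{\widehat n+1} - \widetilde\la_1) \ge \eps^4 \widehat n^4$ yields
\begin{equation*}
|\alpha_{\widehat n} - \widetilde\alpha_{\widehat n}| \le C(R,\eps)\bigl(\widehat n^{-1}|\kappa_{\widehat n+1} - \widetilde\kappa_{\widehat n+1}| + \widehat n^{-2}|\kappa_1 - \widetilde\kappa_1|\bigr),
\end{equation*}
and the extra $\widehat n^{-1}, \widehat n^{-2}$ factors comfortably absorb the weight $\widehat n^{2\al}$ of the $l_2^\al$-norm (using $\al < 1/2$), producing a bound by $C(R,\eps)\|\{\kappa_n - \widetilde\kappa_n\}\|_\al$. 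Combined with the trivial shift estimate for $\widehat\kappa$, this yields \eqref{uniS-}.

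The main obstacle is not conceptual but careful book-keeping: tracking the $n^{2M}$ weights correctly through the index shift and the division by $\la_{\widehat n+1} - \la_1$, and verifying that the constraints defining $\mathcal B_{R,\eps}$ (the spectral gap, $\la_1 \ge 1$, and $1 + \be_n \ge \eps$) indeed produce the uniform lower bounds $\la_n - \la_1 \ge \eps^2 n^2$ and $\alpha_n \ge c(R,\eps) > 0$ on which all preceding estimates rely.
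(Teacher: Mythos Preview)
Your proof is correct and follows essentially the same route as the paper: both reduce to the explicit spectral-data formulas of Proposition~\ref{prop:T} for $\mathbf T_-$, identify $\widehat\kappa$ as a shift of $\kappa$, expand $1+\widehat\be$ as a multiplicative perturbation of $1+\be$ by a factor that is $1+O(n^{-1})$, and then bound the differences pointwise to obtain the $l_2^\al$-Lipschitz estimate. Your write-up is in fact slightly more explicit than the paper's (the factor $\alpha_{\widehat n}$ and the lower bound $\la_{\widehat n+1}-\la_1\ge\eps^2\widehat n^{\,2}$ are spelled out), but the argument is the same.
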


\begin{proof}
For definiteness, consider $\mathbf{T} = \mathbf{T}_-$. The proof for the other cases is analogous. Let $\mathfrak S = \{ \la_n, \ga_n \}_{n \ge 1} \in \mathcal B_{R,\eps}$. By virtue of \eqref{reqBla}, \eqref{reqBga}, and Theorem~\ref{thm:nsc} for $\al = 0$, which has been proved by Guliyev \cite{Gul19}, we conclude that $\mathfrak S$ are the spectral data of some problem $\mathcal L(P)$ with 
$P = (\sigma, f, F) \in \mathcal P^{0,M,N}$.
According to Proposition~\ref{prop:T}, there hold 
$\widehat P = \mathbf{T} (P) \in \mathcal P^{0, \hat M, \hat N}$,
where $\widehat M = M - 1$, $\widehat N = N - 1$.
Moreover, the corresponding spectral data $\mathfrak S = \mathscr S (\widehat P)$ have the form
\begin{equation} \label{relsd}
\widehat \la_n = \la_n, \quad \widehat \ga_n = \frac{\ga_n}{\la_n - \la_1}, \quad n \ge 2.
\end{equation}

Let us show that $\widehat{\mathfrak S} \in \mathcal B_{\widehat R, \widehat \eps}^{\al, \widehat M, \widehat N}$. By virtue of the direct part of Theorem~\ref{thm:nsc} for $\al = 0$, there hold the asymptotics
\begin{equation} \label{asympthat}
\sqrt{\widehat \la_n} = n - \frac{\widehat M + \widehat N}{2} - 2 + \widehat \kappa_n, \quad
\widehat \ga_n = \frac{\pi}{2} (n-1)^{2 \widehat M} (1 + \widehat \be_n), \quad n \ge 2.
\end{equation}
Here, we take the index shift into account. Since $\widehat \la_n = \la_n$, we have $\widehat \kappa_n = \kappa_n$ and so $\{ \widehat \la_n \}$ and $\{ \widehat \kappa_n \}$ fulfill the conditions \eqref{reqBla}. Using \eqref{asympt}, \eqref{relsd}, and \eqref{asympthat}, we derive
\begin{equation} \label{rasympt}
\la_n - \la_1 = n^2 \bigl(1 + O(n^{-1})\bigr), \quad
1 + \widehat \be_n = (1 + \be_n) \bigl(1 + O(n^{-1})\bigr), 
\end{equation}
where the estimates $O(n^{-1})$ are uniform by $\mathfrak S \in \mathcal B_{R, \eps}$. Since $\la_n - \la_1 \ge \eps$ and $1 + \be_n \ge \eps$, then $1 + \widehat \be_n \ge \widehat\eps$. Note that $\{ \tfrac{1}{n} \} \in l_2^{\al}$ for $\al \in (0,\tfrac{1}{2})$. Therefore, \eqref{rasympt} implies $\{ \widehat \be_n \} \in l_2^{\al}$ and $\| \{ \widehat \be_n \} \|_{\al} \le C(R, \eps)$. Thus, we have proved all the requirements \eqref{reqBla} and \eqref{reqBga} for $\widehat S$ with some positive constants $\widehat R, \widehat \eps$ instead of $R, \eps$.

It remains to prove the estimate \eqref{uniS-}. Consider the corresponding collections of the spectral data $\mathfrak S_i$ and $\widehat{\mathfrak S}_i$ for $i = 1, 2$. It follows from \eqref{asympt} that
\begin{equation} \label{estdifla}
|\la_{n,1} - \la_{n,2}| \le C(R) n |\kappa_{n,1} - \kappa_{n,2}|, \quad n \ge 1.
\end{equation}
Using \eqref{asympt}, \eqref{relsd}, \eqref{asympthat}, and \eqref{estdifla}, we derive
$$
|\widehat \be_{n,1} - \widehat \be_{n,2}| \le C(R, \eps) \left( |\be_{n,1} - \be_{n,2}| + \frac{|\kappa_{n,1} - \kappa_{n,2}|}{n} + \frac{|\kappa_{1,1} - \kappa_{1,2}|}{n^2}\right), \quad n \ge 2.
$$
Hence
$$
\| \{ \widehat\be_{n,1} - \widehat\be_{n,2} \} \|_{\al} \le C(R, \eps) \bigl( \| \{ \be_{n,1} - \be_{n,2} \} \|_{\al} + \| \{ \kappa_{n,1} - \kappa_{n,2}\} \|_{\al} \bigr).
$$
The similar estimate for $\{ \widehat \kappa_{n,1} - \widehat \kappa_{n,2} \}$ is obtained trivially. Thus, in view of \eqref{metrS}, we arrive at \eqref{uniS-}.
\end{proof}

\begin{lem} \label{lem:uniS+}
Let $\al \in (0,\tfrac{1}{2})$ and the integers $M, N \ge -1$ be fixed. Then, for any $\mathfrak S \in \mathcal B_{R,\eps}^{\al, M, N}$, $1 \le \mu \le \la_1 - \eps$, and $\eps \le \nu \le R$, the spectral data $\mathscr S \mathbf{T}_+ (\mu, \nu, \mathscr S^{-1}(\mathfrak S))$ belong to $\mathcal B_{\widehat R, \widehat \eps}^{\al, \widehat M, \widehat N}$,
where $\widehat R > 0$ and $\widehat \eps > 0$ depend on $R$ and $\eps$. Furthermore,
the Lipschitz continuity holds:
$$
\varrho_{\al} (\widehat{\mathfrak S}_1, \widehat{\mathfrak S}_2) \le C(R, \eps) \bigl( |\mu_1 - \mu_2| + |\nu_1 - \nu_2| + \varrho_{\al} (\mathfrak S_1, \mathfrak S_2)\bigr)
$$
for any $\mathfrak S_i \in \mathcal B_{R,\eps}$, $1 \le \mu_i \le \la_{1,i} - \eps$, $\eps \le \nu_i \le R$, $\widehat{\mathfrak S}_i := \mathscr S \mathbf{T}_+ (\mu_i, \nu_i, \mathscr S^{-1}(\mathfrak S_i))$, $i = 1, 2$.
\end{lem}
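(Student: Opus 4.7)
The plan is to adapt the strategy of Lemma~\ref{lem:uniS-} to the transform $\mathbf{T}_+$, accounting for the additional parameters $\mu, \nu$ and the fact that both indices now \emph{increase} by one. Starting with $\mathfrak S \in \mathcal B_{R, \eps}^{\al, M, N}$, I invoke Theorem~\ref{thm:nsc} at $\al = 0$ to get $P = \mathscr S^{-1}(\mathfrak S) \in \mathcal P^{0, M, N}$, and Proposition~\ref{prop:T} then yields $\widehat P := \mathbf{T}_+(\mu, \nu, P) \in \mathcal P^{0, \widehat M, \widehat N}$ with $\widehat M = M + 1$, $\widehat N = N + 1$ and spectral data
$$
\widehat \la_1 = \mu, \quad \widehat \ga_1 = \nu, \qquad \widehat \la_n = \la_{n-1}, \quad \widehat \ga_n = \ga_{n-1}(\la_{n-1} - \mu), \quad n \geq 2.
$$

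To verify membership in $\mathcal B_{\widehat R, \widehat \eps}^{\al, \widehat M, \widehat N}$, I rewrite the required asymptotics \eqref{asympt} with the shifted indices and obtain $\widehat \kappa_n = \kappa_{n-1}$ for $n \geq 2$, $\widehat \kappa_1 = \sqrt{\mu} + \tfrac{M+N}{2} + 1$, together with
$$
1 + \widehat \be_n = A_n (1 + \be_{n-1}), \quad A_n := \frac{(n-1)^{2M}(\la_{n-1} - \mu)}{n^{2M+2}}, \quad n \geq 2,
$$
and $\widehat \be_1 = \tfrac{2\nu}{\pi} - 1$. Substituting the asymptotic formula for $\la_{n-1}$ into $A_n$, I obtain $A_n - 1 = O(1/n)$ uniformly on $\mathcal B_{R, \eps}$, and since $\{1/n\} \in l_2^{\al}$ precisely when $\al < \tfrac{1}{2}$, this forces $\{\widehat \kappa_n\}, \{\widehat \be_n\} \in l_2^{\al}$ with norms bounded by $C(R, \eps)$. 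The gap condition reduces to $\sqrt{\la_1} - \sqrt{\mu} \geq \eps /(\sqrt{\la_1} + \sqrt{\mu}) \geq \widehat \eps$, where $\sqrt{\la_1} \leq C(R)$ follows from $|\kappa_1| \leq R$; positivity $1 + \widehat \be_n \geq \widehat \eps$ follows from $1 + \be_{n-1} \geq \eps$ together with a uniform lower bound on $A_n$ (for large $n$ from $A_n \to 1$, for each fixed $n \geq 2$ from $\la_{n-1} - \mu \geq \eps$).

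For the Lipschitz estimate I differentiate the formulas termwise. Using $\la_{n-1, 1} - \la_{n-1, 2} = (\sqrt{\la_{n-1,1}} + \sqrt{\la_{n-1,2}})(\kappa_{n-1,1} - \kappa_{n-1,2})$ together with $\sqrt{\la_{n-1,i}} = O(n)$ uniformly, I find
$$
A_{n,1} - A_{n,2} = O(n^{-1})(\kappa_{n-1,1} - \kappa_{n-1,2}) + O(n^{-2})(\mu_1 - \mu_2),
$$
whence for $n \geq 2$
$$
|\widehat \be_{n,1} - \widehat \be_{n,2}| \leq C(R,\eps)\Bigl(|\be_{n-1,1} - \be_{n-1,2}| + n^{-1}|\kappa_{n-1,1} - \kappa_{n-1,2}| + n^{-2}|\mu_1 - \mu_2|\Bigr),
$$
while $|\widehat \be_{1,1} - \widehat \be_{1,2}| = \tfrac{2}{\pi}|\nu_1 - \nu_2|$ and $|\widehat \kappa_{1,1} - \widehat \kappa_{1,2}| \leq \tfrac{1}{2}|\mu_1 - \mu_2|$. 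Taking $n^{2\al}$-weighted sums and using $\sum n^{2\al - 2} < \infty$ (once more because $\al < \tfrac{1}{2}$), together with the trivial shift $n^{2\al} \leq 2^{2\al}(n-1)^{2\al}$ for $n \geq 2$, yields the claimed bound on $\varrho_{\al}(\widehat{\mathfrak S}_1, \widehat{\mathfrak S}_2)$.

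The main obstacle is the uniform asymptotic analysis of the factor $A_n$: one has to confirm that both the $O(1/n)$ bound on $A_n - 1$ and its Lipschitz dependence on the data are uniform on $\mathcal B_{R, \eps}$, and that the threshold $\al < \tfrac{1}{2}$ is exactly what places the auxiliary sequence $\{1/n\}$ and its companions in $l_2^{\al}$. Once this is in hand, the remaining estimates are routine termwise bounds modelled on the proof of Lemma~\ref{lem:uniS-}.
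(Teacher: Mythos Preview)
Your proposal is correct and follows exactly the route the paper intends: the paper's own proof consists of the single sentence ``The proof of Lemma~\ref{lem:uniS+} is analogous to Lemma~\ref{lem:uniS-},'' and you have carried out precisely that analogy, handling the index shift, the extra pair $(\mu,\nu)$, and the factor $A_n$ in the way the argument of Lemma~\ref{lem:uniS-} dictates. Your verification that $A_n - 1 = O(n^{-1})$ uniformly and that $\{n^{-1}\} \in l_2^{\al}$ exactly when $\al < \tfrac12$ is the correct substitute for the corresponding step \eqref{rasympt} in that proof.
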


The proof of Lemma~\ref{lem:uniS+} is analogous to Lemma~\ref{lem:uniS-}.

\section{Proofs of the main results} \label{sec:proofs}

In this section, we present the proofs of the main theorems formulated in Section~\ref{sec:main}. Theorems~\ref{thm:unidir} and~\ref{thm:uniinv} are proved by induction on $M + N = 2p$. The induction base follows from the results of \cite{SS10}. At each step, we use the Lipschitz continuity of the Darboux-type transforms discussed in Section~\ref{sec:trans}. Next, we consider the finite data approximation and prove Theorems~\ref{thm:fin1} and~\ref{thm:fin2}.

\begin{proof}[Proof of Theorem~\ref{thm:unidir}]
In the case of the Dirichlet boundary conditions $M = N = -1$, Theorem~\ref{thm:unidir} follows from \cite[Theorem 3.8]{SS10} (see also \cite[Theorem~3.15]{SS13}).

Suppose that Theorem~\ref{thm:unidir} is valid for $M + N = 2p$. Let us prove it for $M + N = 2p + 2$. Consider the case $M \ge 0$, $N \ge 0$. Fix $\al \in (0, \tfrac{1}{2})$, $Q > 0$, and $\de > 0$. 
Let $P \in \mathcal P_{Q, \de}^{\al, M, N}$.
By Lemma~\ref{lem:uniT-}, $\widehat P := \mathbf{T}_-(P)$ belongs to
$\mathcal P_{\widehat Q, \widehat \de}^{\al, \widehat M, \widehat N}$, where $\widehat M = M - 1$, $\widehat N = N - 1$, $\widehat Q$ and $\widehat \de$ depend on $Q$ and $\de$. Moreover, the Lipschitz estimate \eqref{uniT-} is fulfilled. For $\widehat M + \widehat N = 2p$, the assertion of Theorem~\ref{thm:unidir} holds by virtue of the induction hypothesis. Therefore, $\widehat{\mathfrak S} := \mathscr S(\widehat P)$ lies in $\mathcal B_{\widehat R, \widehat \eps}^{\al, \widehat M, \widehat N}$, where $\widehat R$ and $\widehat \eps$ depend on $\widehat Q$ and $\widehat \de$, and
\begin{equation} \label{uni1}
\varrho_{\al} (\widehat {\mathfrak S}_1, \widehat {\mathfrak S}_2) \le C(\widehat Q, \widehat \de) \mathbf{d}_{\al}(\widehat P_1, \widehat P_2), \quad \widehat P_i \in \mathcal P_{\widehat Q, \widehat \de}.
\end{equation}

Next, in view of \eqref{T+-}, the spectral data $\mathfrak S := \mathscr S(P)$ equal $\mathscr S \mathbf{T}_+ (\la_1(P), \ga_1(P), \mathscr S^{-1}(\widehat {\mathfrak S}))$. For applying Lemma~\ref{lem:uniS+}, we have to show that
\begin{equation} \label{sd1}
1 \le \la_1(P) \le \la_2(P) - \eta, \quad \eta \le \ga_1(P) \le \Omega,
\end{equation}
for any $P \in \mathcal P_{Q,\de}$,
where positive constants $\eta$ and $\Omega$ depend in $Q$ and $\de$. The inequality $\la_1(P) \ge 1$ follows from \eqref{defPQde}.
According to Lemmas~\ref{lem:Pcomp} and~\ref{lem:la0} and Remark~\ref{rem:lan}, the functions $\la_2(P) - \la_1(P)$ and $\ga_1(P)$ are continuous on the compact set $\mathcal P_{Q,\de}$ in $\mathcal P^{0,M,N}$. This together with the inequalities $\la_1 < \la_2$ and $\ga_1 > 0$ yield \eqref{sd1}. Consequently, Lemma~\ref{lem:uniS+} implies $\mathfrak S \in \mathcal B_{R, \eps}^{\al, M, N}$ and
\begin{equation} \label{uni2}
\varrho_{\al}(\mathfrak S_1, \mathfrak S_2) \le C \bigl( |\la_1(P_1) - \la_1(P_2)| + |\ga_1(P_1) - \ga_2(P_2)| + \varrho_{\al}(\widehat{\mathfrak S}_1, \widehat{\mathfrak S}_2)\bigr),
\end{equation}
where the constants $R > 0$, $\eps > 0$, and $C$ depend on $\widehat R$, $\widehat \eps$, $\eta$, and $\Omega$. From the above arguments, one can easily see that these constants depend on $Q$ and $\de$.

Thus, we have shown that, for any $P \in \mathcal P_{Q,\de}$, the spectral data $\mathscr S(P)$ belong to $\mathcal B_{R, \eps}$. Combining the Lipschitz estimates \eqref{uniT-}, \eqref{uni1}, \eqref{uni2}, and \eqref{unila0}, we arrive at \eqref{unidir}. So, the assertion of Theorem~\ref{thm:unidir} is proved for $M + N = 2p + 2$, $M \ge 0$, $N \ge 0$. This proof can be presented by the following scheme:
$$
P \: \xrightarrow[\text{Lemma~\ref{lem:uniT-}}]{\text{$\mathbf{T}_-$}} \: \widehat P \: \xrightarrow[\text{induction hypothesis for $\widehat M + \widehat N = M + N - 2$}]{\text{$\mathscr S$}} \: \widehat {\mathfrak S} \: \xrightarrow[\text{Lemma~\ref{lem:uniS+}}]{\text{$\mathbf{T}_+$}} \: \mathfrak S
$$

The cases $M = -1$ and $N = -1$ can be analogously reduced to the previously studied case $\widehat M \ge 0$, $\widehat N \ge 0$, $\widehat M + \widehat N = M + N$ by using the transforms $\mathbf{T}_{-+}$ and $\mathbf{T}_{+-}$:
\begin{align*}
M = -1 \colon & \quad P \: \xrightarrow[\text{Lemma~\ref{lem:uniT-}}]{\text{$\mathbf{T}_{+-}$}} \: \widehat P \: \xrightarrow[\text{induction hypothesis for $\widehat M + \widehat N = M + N$, $\widehat M, \widehat N \ge 0$}]{\text{$\mathscr S$}} \: \widehat {\mathfrak S} \: \xrightarrow[\text{Lemma~\ref{lem:uniS-}}]{\text{$\mathbf{T}_{-+}$}} \: \mathfrak S, \\
N = -1 \colon & \quad P \: \xrightarrow[\text{Lemma~\ref{lem:uniT-}}]{\text{$\mathbf{T}_{-+}$}} \: \widehat P \: \xrightarrow[\text{induction hypothesis for $\widehat M + \widehat N = M + N$, $\widehat M, \widehat N \ge 0$}]{\text{$\mathscr S$}} \: \widehat {\mathfrak S} \: \xrightarrow[\text{Lemma~\ref{lem:uniS-}}]{\text{$\mathbf{T}_{+-}$}} \: \mathfrak S.
\end{align*}

This concludes the proof for $M + N = 2p + 2$. The induction yields the assertion of Theorem~\ref{thm:unidir} for any even $M + N \ge -2$.
\end{proof}

\begin{proof}[Proof of Theorem~\ref{thm:uniinv}]
Let us use induction on $M + N = 2p$. For the case of the Dirichlet boundary conditions $(p = -1)$, Theorem~\ref{thm:uniinv} follows from \cite[Theorem 3.8]{SS10} (see also \cite[Theorem~3.15]{SS13}).

Suppose that the assertion of Theorem~\ref{thm:uniinv} holds for $M + N = 2p$. Let us prove it for $M + N = 2p + 2$. Schematically, the proof for different cases can be presented as follows:
\begin{align*}
M, N \ge 0 \colon & \quad \mathfrak S \: \xrightarrow[\text{Lemma~\ref{lem:uniS-}}]{\text{$\mathbf{T}_-$}} \: \widehat{\mathfrak S} \: \xrightarrow[\text{induction hypothesis for $\widehat M + \widehat N = M + N - 2$}]{\text{$\mathscr S^{-1}$}} \: \widehat P \: \xrightarrow[\text{Lemma~\ref{lem:uniT+}}]{\text{$\mathbf{T}_+$}} \: P,\\
M = -1 \colon & \quad \mathfrak S \: \xrightarrow[\text{Lemma~\ref{lem:uniS-}}]{\text{$\mathbf{T}_{+-}$}} \: \widehat{\mathfrak S} \: \xrightarrow[\text{induction hypothesis for $\widehat M + \widehat N = M + N$, $\widehat M, \widehat N \ge 0$}]{\text{$\mathscr S^{-1}$}} \: \widehat P \: \xrightarrow[\text{Lemma~\ref{lem:uniT-}}]{\text{$\mathbf{T}_{-+}$}} \: P,\\
N = -1 \colon & \quad \mathfrak S \: \xrightarrow[\text{Lemma~\ref{lem:uniS-}}]{\text{$\mathbf{T}_{-+}$}} \: \widehat{\mathfrak S} \: \xrightarrow[\text{induction hypothesis for $\widehat M + \widehat N = M + N$, $\widehat M, \widehat N \ge 0$}]{\text{$\mathscr S^{-1}$}} \: \widehat P \: \xrightarrow[\text{Lemma~\ref{lem:uniT-}}]{\text{$\mathbf{T}_{+-}$}} \: P.
\end{align*}

Consider the case $M, N \ge 0$ in more detail. Let $\al \in (0, \tfrac{1}{2})$, $M$, and $N$ be fixed. Suppose that $\mathfrak S \in \mathcal B_{R, \eps}^{\al, M, N}$ for some $R > 0$ and $\eps > 0$. Put $\widehat{\mathfrak S} := \mathscr S \mathbf{T}_- \mathscr S^{-1} \mathfrak S$. By Lemma~\ref{lem:uniS-}, we have $\widehat{\mathfrak S} \in \mathcal B_{\widehat R, \widehat \eps}^{\al, \widehat M, \widehat N}$, where $\widehat R > 0$ and $\widehat \eps > 0$ depend on $R$ and $\eps$, $\widehat M = M - 1$, $\widehat N = N - 1$. Moreover, the estimate \eqref{uniS-} is valid. Since $\widehat M + \widehat N = M + N - 2$, the map $\mathscr S^{-1} \colon \widehat{\mathfrak S} \mapsto \widehat P$ satisfies the assertion of Theorem~\ref{thm:uniinv} by the induction hypothesis. Hence, $\widehat P \in \mathcal P_{\widehat Q, \widehat \de}^{\al, \widehat M, \widehat N}$ with some $\widehat Q$ and $\widehat \de$ depending on $\widehat R$ and $\widehat \eps$. Additionally, the Lipschitz estimate holds:
\begin{equation} \label{uni3}
\mathbf{d}_{\al}(\widehat P_1, \widehat P_2) \le C(\widehat R, \widehat \eps) \varrho_{\al}(\widehat {\mathfrak S}_1, \widehat {\mathfrak S}_2), \quad \widehat {\mathfrak S}_i \in \mathcal B_{\widehat R, \widehat \eps}.
\end{equation}

Next, note that $P := \mathscr S^{-1} (\mathfrak S)$ equals $\mathbf{T}_+(\la_1, \ga_1, \widehat P)$, where $\la_1$ and $\ga_1$ are from $\mathfrak S$. In order to apply Lemma~\ref{lem:uniT+}, we have to show that
\begin{equation} \label{sd2}
1 \le \la_1 \le \widehat \la_2 - \eta, \quad \eta \le \ga_1 \le \Omega, 
\end{equation}
for any $\mathfrak S \in \mathcal B_{R, \eps}$,
where $\eta$ and $\Omega$ are positive constants depending on $R$ and $\eps$. The estimates \eqref{sd2} readily follow from $\mathfrak S \in \mathcal B_{R, \eps}$ and the equality $\widehat \la_2 = \la_2$. Then, by Lemma~\ref{lem:uniT+}, we get $P \in \mathcal P_{Q, \de}^{\al, M, N}$, where $Q > 0$ and $\de > 0$ depend on $R$ and $\eps$, and
\begin{equation} \label{uni4}
\mathbf{d}_{\al}(P_1, P_2) \le C(R, \eps) \bigl( |\la_{1,1} - \la_{1,2}| + |\ga_{1,1} - \ga_{1,2}| + \mathbf{d}_{\al}(\widehat P_1, \widehat P_2)\bigr).
\end{equation}

Combining the Lipschitz estimates \eqref{uniS-}, \eqref{uni3}, and \eqref{uni4}, we arrive at \eqref{uniinv} and so conclude the proof for $M + N = 2p$, $M \ge 0$, $N \ge 0$. The cases $M = -1$ and $N = -1$ are studied analogously. Induction completes the proof.
\end{proof}

\begin{remark} \label{rem:proofnsc}
The proof of Theorem~\ref{thm:unidir}, in particular, shows by induction that, for any $\sigma \in \mathring{W}_2^{\al}[0,\pi]$, $f \in \mathcal R_M$, and $F \in \mathcal R_N$, the sequences of the remainders $\{ \kappa_n \}$ and $\{ \be_n \}$ from the spectral data asymptotics \eqref{asympt} belong to $l_2^{\al}$. This proves the direct part of Theorem~\ref{thm:nsc}. Analogously, the inverse part follows from the proof of Theorem~\ref{thm:uniinv}.
\end{remark}

\begin{remark} \label{rem:even}
Note that the transforms $\mathbf{T}_-$, $\mathbf{T}_{-+}$, $\mathbf{T}_{+-}$, and $\mathbf{T}_{+}$ do not change the parity of $M + N$. Therefore, we have proved Theorems~\ref{thm:unidir} and~\ref{thm:uniinv} only for even $M + N$, since they are based on the results of \cite{SS10}, which were obtained only for the Dirichlet boundary conditions ($M + N = -2$). Nevertheless, the spectral data characterization has been proved in \cite{HM06} for the Dirichlet-Dirichlet and the Neumann-Dirichlet boundary conditions, so we get Theorem~\ref{thm:nsc} for both even and odd $M + N$.
\end{remark}

Proceed to considering the finite data approximation $P_m$ of $P$.

\begin{proof}[Proof of Theorem~\ref{thm:fin1}]
Let $P$ satisfy the hypothesis of the theorem. In view of the continuous embedding $W_2^{\al_2}[0,\pi] \subset W_2^{\al_1}[0,\pi]$ for $\al_1 < \al_2$, we have $P \in \mathcal P_{Q,\de}^{\al_1, M, N}$. By Theorem~\ref{thm:nsc}, the numbers $\mathfrak S_m = \{ \la_{n,m}, \ga_{n,m} \}_{n \ge 1}$ for each sufficiently large $m$ are the spectral data of some problem $\mathcal L(P_m)$, where $P_m \in \mathcal P^{\al, M, N}$ for any $\al \in [0,\tfrac{1}{2})$. Applying Theorem~\ref{thm:unidir} with $\al = \al_1$, we conclude that $\mathfrak S \in \mathcal B_{R,\eps}$, where $R$ and $\eps$ depend on $Q$ and $\de$. Furthermore, we have $\mathfrak S_m \in \mathcal B_{R,\eps}$ for sufficiently large $m$ in view of \eqref{reqBla}, \eqref{reqBga}, and \eqref{defsdm}. Theorem~\ref{thm:uniinv} implies $P_m \in \mathcal P_{\widehat Q, \widehat \de}$, where $\widehat Q$ and $\widehat \de$ depend on $Q$ and $\de$. Therefore, part (i) of Corollary~\ref{cor:uni} yields
\begin{equation} \label{est1}
\mathbf{d}_{\al_1}(P, P_m) \le C(Q,\de) \varrho_{\al_1}(\mathfrak S, \mathfrak S_m).
\end{equation}
According to \eqref{metrS}, we have
$$
\varrho_{\al_1}(\mathfrak S, \mathfrak S_m) = \| \{ \kappa_n - \kappa_{n,m} \} \|_{\al_1} + \| \{ \be_n - \be_{n,m} \} \|_{\al_1}.
$$
Using \eqref{defsdm}, we get
$$
\| \{ \kappa_n - \kappa_{n,m} \} \|_{\al_1} = \sqrt{\sum_{n = m + 1}^{\infty} n^{2\al_1} \kappa_n^2} \le m^{\al_1 - \al_2} \| \{ \kappa_n \} \|_{\al_2}.
$$
Using the similar estimate for $\{ \be_n \}$ together with \eqref{reqBla} and \eqref{reqBga}, we obtain
\begin{equation} \label{est2}
\varrho_{\al_1}(\mathfrak S, \mathfrak S_m) \le m^{\al_1 - \al_2} \bigl( \| \{ \kappa_n \} \|_{\al_2} + \| \{ \be_n \} \|_{\al_2} \bigr) \le C(Q,\de) m^{\al_1 - \al_2}.
\end{equation}
Combining \eqref{est1} and \eqref{est2} yields the claim.
\end{proof}

\begin{proof}[Proof of Theorem~\ref{thm:fin2}]
Analogously to the previous proof, we get the estimate
\begin{equation} \label{est3}
\mathbf{d}_{\al_1}(P, \tilde P_m) \le C(Q,\de) \varrho_{\al_1}(\mathfrak S, \tilde{\mathfrak S}_m)
\end{equation}
for all sufficiently small $\eps > 0$ and sufficiently large $m$. It follows from \eqref{asympt} and \eqref{esteps} that
$$
|\kappa_n - \tilde \kappa_n| \le C(Q,\de) \eps n^{-1}, \quad 
|\be_n - \tilde \be_n| \le C(Q,\de) \eps n^{-2M}, \quad n = \overline{1,m}.
$$
Using the latter estimates together with \eqref{est2} and \eqref{est3}, we arrive at \eqref{fin2}.
\end{proof}

\begin{remark}
In this paper, we confine ourselves by $\al < \tfrac{1}{2}$ for technical simplicity. Our methods also work for $\al \ge \tfrac{1}{2}$, but then spectral data asymptotics contain more terms:
$$
\sqrt{\la_n} = s(n) + \frac{\om_1}{s(n)} + \frac{\om_2}{s^3(n)} + \dots + \frac{\om_k}{s^{2k-1}(n)} + o\left( n^{-(2k-1)} \right), \quad s(n) = n - \tfrac{M + N}{2} - 1.
$$
The constants $\{ \om_j \}_{j = 1}^k$ are related to $\sigma$ and to the coefficients of the rational functions $f(\la)$ and $F(\la)$, which complicates the technique.
\end{remark}

\medskip

{\bf Acknowledgment.} The author is grateful to Professors Maria A. Kuznetsova and Namig J. Guliyev for reading the manuscript and for their valuable remarks.

\medskip

{\bf Funding.} This work was supported by Grant 24-71-10003 of the Russian Science Foundation, https://rscf.ru/en/project/24-71-10003/.

\medskip

\noindent Natalia Pavlovna Bondarenko \\

\noindent 1. Department of Mechanics and Mathematics, Saratov State University, 
Astrakhanskaya 83, Saratov 410012, Russia, \\

\noindent 2. Department of Applied Mathematics, Samara National Research University, \\
Moskovskoye Shosse 34, Samara 443086, Russia, \\

\noindent 3. S.M. Nikolskii Mathematical Institute, RUDN University, 6 Miklukho-Maklaya St, Moscow, 117198, Russia, \\

\noindent 4. Moscow Center of Fundamental and Applied Mathematics, Lomonosov Moscow State University, Moscow 119991, Russia.\\

\noindent e-mail: {\it bondarenkonp@sgu.ru}
\end{document}